\documentclass[11pt]{amsart}

\pdfoutput=1

\setlength{\textwidth}{\paperwidth}
\addtolength{\textwidth}{-2.25in}
\calclayout

\usepackage{amsfonts,amsmath}
\usepackage{graphicx}
\usepackage{bm}
\usepackage{comment}

\usepackage{verbatim}



\newtheorem{theorem}{Theorem}[section]
\newtheorem{proposition}[theorem]{Proposition}
\newtheorem{lemma}[theorem]{Lemma}
\newtheorem{corollary}[theorem]{Corollary}

\newtheorem{example}[theorem]{Example}
\newtheorem{definition}[theorem]{Definition}


\begin{document}

\title{Algebraic Model Selection and Experimental Design in Biological Data Science 
}

\author{Anyu Zhang}
\address{Amazon.com, Inc, 500 Boren Ave N, Seattle, WA 98109}
\email{anyuzhan@amazon.com}
\author{Jingzhen Hu}
\author{Qingzhong Liang}
\address{Department of Mathematics, Duke University, Durham, NC 27708}
\email{jingzhen.hu@duke.edu, qingzhong.liang@duke.edu}
\author{Elena S. Dimitrova}
\address{Department of Mathematics, California Polytechnic State University, San Luis Obispo, CA 93407}
\email{edimitro@calpoly.edu}
\author{Brandilyn Stigler}
\address{Department of Mathematics, Southern Methodist University, Dallas, TX 75275} \email{bstigler@smu.edu}

\thanks{All authors, except QL, were supported by National Science Foundation Awards 1419023, 1419038, 1720335, 1720341.}

\begin{abstract}
Design of experiments and model selection, though essential steps in data science, are usually viewed as unrelated processes in the study and analysis of biological networks. Not accounting for their inter-relatedness has the potential to introduce bias and increase the risk of missing salient features in the modeling process.
We propose a data-driven computational framework to unify experimental design and model selection for discrete data sets and minimal polynomial models. We use a special affine transformation, called a linear shift, to provide both the data sets and the polynomial terms that form a basis for a model. This framework enables us to address two important questions that arise in biological data science research: finding the data which identify a set of known interactions and finding identifiable interactions given a set of data. We present the theoretical foundation for a web-accessible database. As an example, we apply this methodology to a previously constructed pharmacodynamic model of epidermal derived growth factor receptor (EGFR) signaling.

Keywords: Model selection, experimental design, biological data science, 
    Gr\"obner bases, standard monomials, finite fields

  MSC2020: 13P10, 14G15, 93B15, 93B20

\end{abstract}

\maketitle

\section{Introduction}

As technologies to collect, store, compute, and analyze large data sets from laboratory experiments have become routine in biomedical research, biological data science \cite{schatz} has emerged as a new member of an array of disciplines, where the emphasis is on data-driven methods.
Examples include topological data analysis and persistent homology, machine learning, as well as a variety of algebraic approaches~\cite{carlsson20, chazal16, Chazal2017AnIT,deisenroth2020mathematics, Ghrist2018HomologicalAA, wasserman18}.
What all of these methods have in common is being a component of the data-science pipeline: data collection and processing which includes experimental design; modeling, analysis, and visualization which include model selection and reduction; and finally decision-making.

While these components are traditionally viewed as separate processes, there have been efforts to couple them.
In~\cite{jeong} the authors describe a framework for connecting design of experiments and model reduction to address the problem of ``bridging the gap between high model complexity and
limited available experimental data.''  However, limited data lead to a very large pool of candidate models which impedes the model selection process.
In~\cite{robbiano} the author connects experimental design and model selection for discrete data and models given by polynomials.
It is stated that a polynomial $f$ is \emph{identifiable} by a data set $D$ if and only if the matrix formed by evaluating the monomials on the data is full rank.
While the condition for identifiability was given, the strategy for choosing data points was not provided.  Furthermore, no procedure to construct a polynomial that is identifiable by a given set of points was given.  In short, both the data points and the monomials comprising the polynomial must be known in order to decide on identifiability.  As a third example, in~\cite{dimitrova2014data} the authors specialized the results in~\cite{robbiano} to so-called \emph{minimal} polynomial models which allowed them to determine identifiability without knowledge of the monomials.

In this work we propose a framework that unifies experimental design and model selection for discrete data sets and minimal polynomial models. The framework is built on affine transformations of input data sets, called \emph{linear shifts}.
Input data may be thought of as initializations or stimuli of nodes in a network, while the corresponding output data may be interpreted as the trajectory of the initialization or the response of the network to the stimuli.
%
\emph{Minimal} here refers to no subpolynomials which correspond to unobserved interactions.
We provide both the data sets and the monomials that form a basis for a model: input data are paired with model bases that are identifiable by the input data so that model bases and data points are chosen simultaneously.

Such a framework can address the following types of questions:
\begin{itemize}
    \item Given a known interaction, which experiments/data identify the interaction?
    \item Given a set of data, corresponding to experimental conditions, what interactions are identifiable?
\end{itemize}
An outcome of this framework  is efficiency in moving through a data-science pipeline: iterations between experimental design and model selection with incremental progress is replaced with simultaneous execution with an exhaustive view of all possible predictive outcomes.

In Section \ref{sec:background} we provide relevant background including prior results on linear shifts. What follows is a theoretical development of equivalence classes defined by linear shifts. In particular in Section \ref{sec:results}, we define the representative of an equivalence class, which may be used to display data in a ``standard position'' and establish characteristics of the representatives as well as interesting divisibility properties of the equivalence classes.
The theoretical results are accompanied by computational tools in the form of a database and a website, described in Section \ref{sec:web-introduction}.  We demonstrate how to address the guiding questions on a signalling network using the proposed framework in Section \ref{sec:applications} and close with a discussion of implications of this work.

\section{Background}
\label{sec:background}

Let $n\in\mathbb N$ be the number of coordinates representing nodes in a network.  We denote by $p\in\mathbb N$ the number of states each node can take and view the states as elements of $\mathbb Z_p=\{0,\ldots,p-1\}$, that is the set of integer remainders upon division by $p$. In order for~$\mathbb Z_p$ to assume the structure of a finite field, we require that $p$ be prime\footnote{While field theory permits  prime-powered cardinality, we restrict to the prime case to facilitate interpretation of the values in the field.}. Finally, let $m\in\mathbb N$ denote the  number of input data points, each of which corresponds to an initialization  or stimulus of the network.

Let $\left\{\mathbb Z_p^n\right\}_m$  denote the collection of all possible subsets of $\mathbb Z_p^n$ of $m$ points.
Note that $\big|\left\{\mathbb Z_p^n\right\}_m\big| = \binom{p^n}{m}.$ The input sets we consider  are subsets of $\left\{\mathbb Z_p^n\right\}_m$ for fixed $p$, $n$, and $m$.

\begin{example}
	Let $p=3$, $n=2$, and $m=2$. Then $\left\{\mathbb Z_3^2\right\}_2 = \{\{(0,0),(0,1)\}$, $\{(0,0),(0,2)\}$, $\{(0,0),(1,0)\}$, $\{(0,0),(1,1)\}$, $\{(0,0),(1,2)\},\ldots,\{(2,1),(2,2)\}\}$. The  values 0, 1, and 2 may be interpreted as low, medium, and high, respectively.  The set $\{(0,0),(2,2)\}$ may represent two initializations of a network: in one experiment both nodes are low and in the other  both are high.
\end{example}

A polynomial $f$ is \emph{identifiable} by a set $S$ of data if and only if the evaluation matrix $\mathbb X(f,S)$ is full rank, where the rows are the points in $S$, the columns are the monomials that appear in $f$, and the entries are evaluations of the points on the monomials~\cite{robbiano}.

\subsection{Algebraic Geometry Preliminaries}

Most of the terms, notation, and language in this section are based on \cite{cox}.

Given a set $S\subseteq \mathbb{Z}_p^n$ of input data, it is of interest to consider all polynomials in $n$ variables with coefficients in $\mathbb Z_p$ which vanish on $S$. We call the set of these polynomials an \emph{ideal of points}, denoted $I(S)$. Since for multivariate polynomials there is no unique way of ordering the polynomials terms (\emph{e.g.} consider $x^2$ and $xy$), we need to make a choice for the ordering of the monomials in a given polynomial. We thus introduce a \emph{monomial order} $\prec$ (sometimes called a term order or an admissible order) to be a total order on the set of all (monic) monomials.

The choice of a monomial order $\prec$ allows for sorting the terms of a polynomial. The leading term of a polynomial $f$ is thus the term of the largest monomial for the chosen monomial order, denoted as $LT_\prec(f)$. Also we call $LT_\prec(I(S))=\langle LT_\prec(f):f\in I(S)\rangle$ the \emph{leading term ideal} for an ideal $I(S)$.

Now let $\prec$ be a monomial order and let $I(S)$ be an ideal of points. Then $G\subseteq I(S)$ is a \emph{Gr\"obner basis (GB)} for $I(S)$ with respect to $\prec$ if for all $f\in I(S)$ there exists $g\in G$ such that the leading term $LT_\prec(g)$ divides $LT_\prec(f)$. Also $G$ is \emph{reduced} if every $g\in G$ is monic and no monomials in $g-LT_\prec(g)$ are leading terms.

\begin{definition}
The monomials which do not lie in $LT_\prec(I(S))$ are called the \emph{standard monomials} (SMs) of $I(S)$ with respect to $\prec$, denoted $SM_\prec(I(S))$.
\end{definition}

Gr\"obner bases exist for every nonzero ideal of points and every $\prec$ and make multivariate polynomial division well defined in that remainders are unique; furthermore for any ideal the set of reduced GBs is finite. Let~$R$ be the set of all polynomials in $n$ variables over $\mathbb Z_p$. Then $R/I(S)=\{f+I(S):f\in R\}$ is the set of polynomial functions that are ``minimal'' with respect to $I(S)$ in the sense that there is no nonzero polynomial $g\in R$ and $h\in R$ such that $f = h + g$ and $g$ is identically zero on the points in $S$. Any set of standard monomials $SM_\prec(I(S))$ for a given monomial order forms a basis for $R/I(S)$ as a vector space with dimension $|S|$. Furthermore, sets of standard monomials are in one-to-one correspondence with leading term ideals for $I(S)$.

Sets of standard monomials are said to be \emph{closed} as they satisfy the following: if $x^\alpha \in SM_\prec (I(S))$ and $x^\beta$ divides $x^\alpha$, then $x^\beta \in SM_\prec (I(S))$.  This divisibility property on monomials is equivalent to the following geometric relation on data points plotted on a lattice.

\begin{definition}
\label{def:staircase}
A set $\lambda\subset \mathbb N^n$ is a \emph{staircase} if  for all $u\in \lambda$, $v\in \mathbb N^n$ and $v\leq u$ imply $v\in \lambda$.
\end{definition}
The relation $v\leq u$ is defined coordinatewise, in that $v_i\leq u_i$ for $1\leq i\leq n$.

Once experiments are performed using the points in a given input data set $S$ as initializations, for each node $x_i$, $i=1,\ldots n$, we have an input-output data set $D_i=\{(\mathbf{d_1},t_{i1}),\ldots,\allowbreak(\mathbf{d_m},t_{im})\}$,
where $\mathbf{d_j}\in S$  are network inputs and $t_{ij}\in \mathbb{Z}_p$ are the experimental outputs.
Consider the $n$-tuple $F=(f_1,\ldots ,f_n)$, where each coordinate is a polynomial function $f_i:\mathbb{Z}_p^n\rightarrow \mathbb{Z}_p$ which \emph{fits} $D_i$ in the sense that $f_i(\mathbf{d_j})=t_{ij}$ for each $j=1,\ldots, m$.
The \emph{model space} for the input-output data sets $D_1,\ldots, D_n$ is the set $F+I(S):=\{(f_1+h_1,\ldots,f_n + h_n):h_i\in I(S)\}$ of all systems of polynomials which fit the data in $D_1,\ldots, D_n$. A particular model from the model space can be selected by choosing a monomial order $\prec$ and then computing the remainder of each $f_i$ when divided by the polynomials in $I(S)$ written in terms of the GB for $I(S)$ with respect to $\prec$.  We call
$$(f_1\mathrm{~mod~} G,\ldots,f_n\mathrm{~mod~} G)$$
the \emph{minimal} model with respect to $\prec$, where $G$ is a GB for $I(S)$ with respect to $\prec$, since no sum of nonzero terms of $f_i\mathrm{~mod~} G$ vanishes on $S$. An algorithm for computing all minimal models for a given input-output data set was first given in~\cite{laubenbacher}.

The following example illustrates that each choice of GB results in a different minimal model.

\begin{example}
\label{ex:example1}
Consider $S_1=\{(0,0),(1,1)\}\subseteq \mathbb Z_2^2$, where 0 and 1 may be interpreted as off and on, respectively. The  ideal $I(S_1)$ of the points in $S_1$ has two reduced GBs, namely 
\begin{center}
$G_1=\{\underline{x_1}-x_2,\underline{x_2^2}-x_2\}\mathrm{~and~} G_2=\{\underline{x_2}-x_1,\underline{x_1^2}-x_1\}$,
\end{center}
where the leading terms are underlined. The corresponding standard monomial sets are $\{1, x_2\}$ and $\{1, x_1\}$, respectively. As such there are two resulting minimal models:
any minimal model with respect to $G_1$ will be a linear combination of 1 and $x_2$ as all $x_1$'s cancel out. Similarly, any minimal model with respect to $G_2$ will be in terms of 1 and $x_1$ only.

If $S_2=\{(0,0),(0,1)\}$, then $I(S_2)$ has a unique GB
$\{\underline{x^2_2}-x_2,\underline{x_1}\}$, corresponding to a unique standard monomial set $\{1,x_2\}$, and resulting in a unique minimal model. Also $S_3=\{(1,0),(1,1)\}$  has a unique associated GB
$\{\underline{x^2_2}-x_2,\underline{x_1}+1\}$, corresponding to the same standard monomial set $\{1,x_2\}$ as $S_2$.
\end{example}

Notice that the last two input data sets in the above example return the same standard monomial basis. Is there any relationship between the two sets? We answer this next.

\subsection{Linear Shifts of Input Data Sets}
\label{sec:LS}

Notice that we can construct the following affine transformation:

\[
\begin{bmatrix}
a_1  \hspace{0.1cm}    0 \hspace{0.25cm}b_1\\

\hspace{0.1cm} 0 \hspace{0.1cm}       \hspace{0.1cm} a_2 \hspace{0.1cm} b_2 \\
\end{bmatrix}
\begin{bmatrix}
    0  \hspace{0.1cm}    \hspace{0.1cm} 0\\
    0  \hspace{0.1cm}    \hspace{0.1cm} 1\\
    1  \hspace{0.1cm}    \hspace{0.1cm}  1         \\
\end{bmatrix}
=
\begin{bmatrix}
1  \hspace{0.1cm}    0 \hspace{0.25cm}1\\

0 \hspace{0.1cm}     1 \hspace{0.25cm} 0 \\
\end{bmatrix}
\begin{bmatrix}
    0  \hspace{0.1cm}    \hspace{0.1cm} 0\\
    0  \hspace{0.1cm}    \hspace{0.1cm} 1\\
    1  \hspace{0.1cm}    \hspace{0.1cm}  1         \\
\end{bmatrix}
=
\begin{bmatrix}
    1  \hspace{0.1cm}   \hspace{0.1cm} 1\\
    0 \hspace{0.1cm}    \hspace{0.1cm} 1\\
\end{bmatrix}
.\]

By applying the function $f_1$ to the first coordinate and $f_2$ to the second coordinate of the points in the input data set $S_2=\{(0,0), (0,1)\}$, they will ``shift'' the set to $S_3=\{(1,0), (1,1)\}$.  This affine transformation on input data sets was termed a \emph{linear shift} in~\cite{he2016}.

As the model space can be large, this creates ambiguity in predictions. We can eliminate this ambiguity by reducing the number of GBs.  While our ultimate goal is to identify input data sets with a unique GB, we propose a method for deleting redundant information which is motivated by the linear shift transformation. By checking if an unknown data set is the linear shift of a known data set, then the associated standard monomial bases will be found without additional computational cost.

\begin{definition}[\cite{he2016}]
\label{def:LS}
Let $S_i=\{\mathbf{x}^i_1,\ldots,\mathbf{x}^i_m\},S_j=\{\mathbf{x}^j_1,\ldots,\mathbf{x}^j_m\}\in \mathbb Z_p^n$ be input data sets, i.e.
$$S_i=\{(x_{11}^i,x_{12}^i,\ldots,x_{1n}^i),\ldots,(x_{m1}^i,x_{m2}^i,\ldots,x_{mn}^i)\}$$ and $$S_j=\{(x_{11}^j,x_{21}^j,\ldots,x_{1nn}^j),\ldots,(x_{m1}^j,x_{m2}^j,\ldots,x_{mn}^j)\}.$$

We say that $S_i$ is a \emph{linear shift} of $S_j$, denoted $S_i\sim S_j$, if there exists $\phi=(\phi_1,\dots,\phi_n):\mathbb Z_p^n\rightarrow \mathbb Z_p^n$ such that
 $\phi_k(\mathbf{x}^i_{uk})=a_k{\mathbf{x}^i_{uk}}+b_k=\mathbf{x}^j_{uk}$, $a_k\in \mathbb Z^\times_p, b_k\in \mathbb Z_p, $  $k=1,\ldots,n$, and $u = 1,\ldots, m$, i.e. $S_i=\phi(S_j)$.
\end{definition}

Throughout the rest of the manuscript, we will denote input data sets as in Definition~\ref{def:LS}.

\begin{example}
    Consider the linear shift illustrated in Figure \ref{fig:lin-shift} for $p=3, n=2, m=3$. There are data sets for which there is no linear shift between them;  see Figure \ref{fig:no-lin-shift}.
\end{example}

It is straightforward to see that the linear shift is a bijection and thus induces an equivalence relation on $\left\{\mathbb Z_p^n\right\}_m$ \cite{he2016}. The following matrix encodes the equivalence class associated to an input data set.

\begin{figure}
\label{fig:lin-shift}
\begin{center}
\includegraphics[width=9cm]{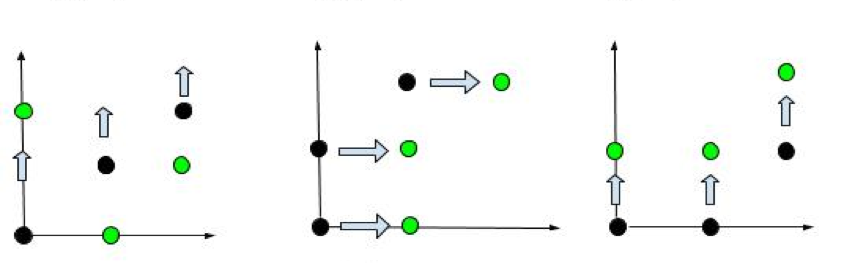}
\caption{Linear shifts of points in $\mathbb Z_3^2$. Left: The data set of black points  $\{(0,0),(1,1),(2,2)\}$  is the linear shift of the data set of green points $\{(0,2),(1,0),(2,1)\}$ by  $\phi=(\phi_1,\phi_2)=(x,y+2)$.
Center: The  set of black points  $\{(0,0),(0,1),(1,2)\}$ is the linear shift of the green points $\{(1,0),(1,1),(2,2)\}$ by  $\phi=(\phi_1,\phi_2)=(x+1,y)$.
Right: The  set of black points  $\{(0,0),(1,0),(2,1)\}$ is the linear shift of  the green points $\{(0,1),(1,1),(2,2)\}$ by  $\phi=(\phi_1,\phi_2)=(x,y+1)$.}
\end{center}
\end{figure}

\begin{figure}
\label{fig:no-lin-shift}
\begin{center}
\includegraphics[width=3cm]{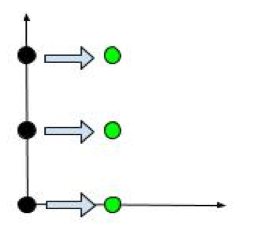}
\includegraphics[width=3cm]{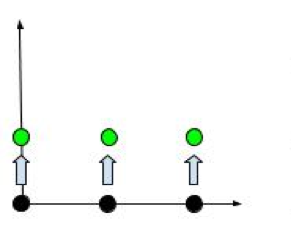}
\caption{The data set of black points  $\{(0,0),(0,1),(0,2)\}$ in the left plot is not the linear shift of the set of black points $\{(0,0),(1,0),(2,0)\}$ in the right plot: there is no map for the first coordinate from a single value~0 in the data set on the left to three  values in the data set on the right.}
\end{center}
\end{figure}

\begin{definition}
\label{def:LSM}
	Let $S$ be an input set with fixed $p$, $n$, and $m$. We define the \emph{linear shift matrix} as
	\begin{displaymath}
\mathcal{M}(S) =  \left(\begin{array}{cccc}
a_1x_{11}+b_1 & a_2x_{21}+b_2 & \cdots &  a_nx_{n1}+b_n\\
\vdots &\vdots &\ddots &\\
a_1x_{1m}+b_1 & a_2x_{2m}+b_2 & \cdots & a_nx_{nm}+b_n
\end{array}\right)\in \left(\left\{\mathbb Z_p^n\right\}_m\right)^{p^n\times(p-1)^n} \end{displaymath}
in which each column corresponds to a choice of $\bm{a} = (a_1,\ldots,a_n)$, and each row corresponds to a choice of $\bm{b}=(b_1,\ldots,b_n)$, where $a_i \in \mathbb{Z}_p \setminus \{0\}$ and $b_j \in \mathbb{Z}_p$ for $1\le i,j  \le n$.
We use $$\mathcal{M}(S,\bm{a},\bm{b}) = \{(a_1x_{11}+b_1,\ldots,a_nx_{1n}+b_n) , \ldots, (a_1x_{m1}+b_1,\ldots,a_nx_{mn}+b_n)\}$$ to denote  an entry corresponding to column $\bm{a}$ and row $\bm{b}$ of the linear shift matrix.
\end{definition}

Note that $\mathcal{M}(S,\bm{1},\bm{0}) =S$.

\begin{example}
	Let $S = \{(0,1),(1,0)\}\in \left\{\mathbb Z_3^2\right\}_2$. The linear shift matrix $\mathcal{M}(S)$ is represented  in Example \ref{example matrix 1}, where the entries comprise the equivalence class $[S]$, namely

\noindent\begin{tabular}{rl}
 & $\{\{(0, 0), (1, 1)\},
\{(0, 0), (1, 2)\},
\{(0, 0), (2, 1)\},
\{(0, 0), (2, 2)\},
\{(0, 1), (1, 0)\},
\{(0, 1), (1, 2)\}$,\\
 &  $\{(0, 1), (2, 0)\},
\{(0, 1), (2, 2)\},
\{(0, 2), (1, 0)\},
\{(0, 2), (1, 1)\},
\{(0, 2), (2, 0)\},
\{(0, 2), (2, 1)\}$,\\
&  $\{(1, 0), (2, 1)\},
\{(1, 0), (2, 2)\},
\{(1, 1), (2, 0)\},
\{(1, 1), (2, 2)\},
\{(1, 2), (2, 0)\},
\{(1, 2), (2, 1)\}\}.$
\end{tabular}

    \begin{table}[ht]
	\centering
	\begin{tabular}{ c | c c c c}
		& $a_1=1,a_2=1$ & $a_1=1,a_2=2$ & $a_1=2,a_2=1$ & $a_1=2,a_2=2$ \\ \hline
		$b_1=0,b_2=0$ & $\{(0,1),(1,0)\}$ & $\{(0,2),(1,0)\}$ & $\{(0,1),(2,0)\}$& $\{(0,2),(2,0)\}$ \\
		$b_1=0,b_2=1$ & $\{(0,2),(1,1)\}$ & $\{(0,0),(1,1)\}$ & $\{(0,2),(2,1)\}$& $\{(0,0),(2,1)\}$   \\
		$b_1=0,b_2=2$ & $\{(0,0),(1,2)\}$ & $\{(0,1),(1,2)\}$ &$\{(0,0),(2,2)\}$& $\{(0,1),(2,2)\}$   \\
		$b_1=1,b_2=0$ & $\{(1,1),(2,0)\}$ & $\{(1,2),(2,0)\}$ & $\{(1,1),(0,0)\}$& $\{(1,2),(0,0)\}$   \\
		$b_1=1,b_2=1$ & $\{(1,2),(2,1)\}$ & $\{(1,0),(2,1)\}$ & $\{(1,2),(0,1)\}$ & $\{(1,0),(0,1)\}$ \\
		$b_1=1,b_2=2$ &$\{(1,0),(2,2)\}$ & $\{(1,1),(2,2)\}$ & $\{(1,0),(0,2)\}$& $\{(1,1),(0,2)\}$ \\
		$b_1=2,b_2=0$ &$\{(2,1),(0,0)\}$ & $\{(2,2),(0,0)\}$ &$\{(2,1),(1,0)\}$ & $\{(2,2),(1,0)\}$ \\
		$b_1=2,b_2=1$ &$\{(2,2),(0,1)\}$ & $\{(2,0),(0,1)\}$ & $\{(2,2),(1,1)\}$& $\{(2,0),(1,1)\}$  \\
		$b_1=2,b_2=2$ &$\{(2,0),(0,2)\}$ & $\{(2,1),(0,2)\}$ &$\{(2,0),(1,2)\}$& $\{(2,1),(1,2)\}$
	\end{tabular}
	\caption{The linear shift matrix $\mathcal{M}(S)$ for $p=3,n=2,m=2$ with input set $S=\{(0,1),(1,0)\}\in  \left\{\mathbb Z_3^2\right\}_2$. Each entry of the matrix is the linear shift $(a_1x+b_1,a_2y+b_2)$ of the set $S$. For example, in the third row and the second column, the linear shift  $(x,2y+2)$ of $S$ is $\{(0,2\cdot 1+2),(1,2\cdot 0+2)\}=\{(0,1),(1,2)\}$.
    \label{example matrix 1}}
    \end{table}
\end{example}

\section{Properties of the Equivalence Classes Defined by Linear Shifts}
\label{sec:results}

In previous work, linear shifts of input data sets of the same size were shown to partition these sets into equivalence classes~\cite{he19}.  Moreover, those equivalence classes are annotated with bases of monomials that are identifiable by any input data set in the equivalence class.  Since each choice of basis results in a unique model, we refer to the monomial bases as \emph{model bases}. As stated in the Introduction, we propose a framework for connecting experimental design and model selection. The foundation for this framework is the set of equivalence classes of input data sets annotated by model bases.  In this section, we present properties of these annotated equivalence classes for data sets in $\left\{\mathbb Z_p^n\right\}_m$, where $p$ is the number of states, $n$ is the number of coordinates, and~$m$ is the number of input data points.

\subsection{Representatives}
\label{sec:representatives}
We now define representatives for the equivalence classes of input data sets of same size.  We use a distance function to identify sets that are the most ``condensed,'' in that their elements are the closest to the origin and so maximize the number of zeros in the coordinates.

\begin{lemma}[Proposition 78 in~\cite{he2016}]
\label{thm:staircaseD}
If $S_1$ and  $S_2$  are distinct staircases for fixed $p, n$, and $m$, then for any monomial order $LT_\prec(I(S_1))\neq LT_\prec(I(S_2))$ with different corresponding standard monomial bases.
\end{lemma}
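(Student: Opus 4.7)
The plan is to prove the stronger assertion that for any staircase $S$ (viewed as a subset of $\{0,\ldots,p-1\}^n \subseteq \mathbb N^n$) and any monomial order $\prec$, one has $SM_\prec(I(S)) = \{x^u : u \in S\}$; that is, the standard monomial basis is intrinsic to the staircase and does not depend on the order. Once this is established, distinct staircases $S_1 \neq S_2$ produce distinct monomial sets $\{x^u : u \in S_i\}$, hence distinct $SM_\prec(I(S_i))$, and by the one-to-one correspondence between standard monomial sets and leading term ideals noted in Section~\ref{sec:background} this gives distinct $LT_\prec(I(S_i))$ as well.

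To prove this key claim, note first that $|SM_\prec(I(S))| = \dim_{\mathbb Z_p} R/I(S) = |S| = |\{x^u : u \in S\}|$, so by cardinality it suffices to establish the inclusion $\{x^u : u \notin S\} \subseteq LT_\prec(I(S))$. Equivalently, for each $u \in \mathbb N^n \setminus S$ I must exhibit a polynomial $f_u \in I(S)$ with $LT_\prec(f_u) = x^u$.

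For $u$ with all coordinates $u_i < p$, take the binomial polynomial
\[
L_u(x) \;=\; \prod_{i=1}^{n}\binom{x_i}{u_i} \;=\; \frac{1}{u_1!\cdots u_n!}\,x^u \;+\; (\text{monomials } x^v \text{ with } v<u \text{ componentwise}),
\]
which lies in $R = \mathbb Z_p[x_1,\ldots,x_n]$ because each $u_i!$ is a unit in $\mathbb Z_p$. The elementary identity $\binom{s_i}{u_i}=0$ for $0\le s_i<u_i$ implies $L_u(s)=0$ for every $s\in\{0,\ldots,p-1\}^n$ with $s\not\geq u$. The staircase condition on $S$ forbids any $s\in S$ from satisfying $s\geq u$, since $u\leq s\in S$ would force $u\in S$; thus $L_u$ vanishes on all of $S$ and so $L_u\in I(S)$. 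Because every monomial order refines componentwise divisibility, each monomial $x^v$ with $v<u$ satisfies $x^v\prec x^u$, so $LT_\prec(L_u)$ is a nonzero scalar multiple of $x^u$, placing $x^u\in LT_\prec(I(S))$. Exponents $u$ with some $u_i\geq p$ are absorbed uniformly by the Fermat-type polynomial $x^{u-pe_i}(x_i^p-x_i)\in I(\mathbb Z_p^n)\subseteq I(S)$, whose leading monomial is also $x^u$.

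The main technical obstacle is verifying that $L_u$ is simultaneously (i) a bona fide polynomial over $\mathbb Z_p$ with invertible coefficient on $x^u$, which uses $u_i<p$ crucially, and (ii) identically zero on $S$, which is exactly where the staircase hypothesis enters through the downward-closure property. Once these two points are in hand, the divisibility-refinement property of monomial orders delivers the order-independent leading term $x^u$, the cardinality count collapses the inclusion to the equality $SM_\prec(I(S)) = \{x^u : u \in S\}$, and the lemma follows immediately since distinct staircases manifestly yield distinct such monomial sets.
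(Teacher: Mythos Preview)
Your proof is correct and takes a different, more constructive route than the paper's. You establish the stronger statement that for any staircase $S$ and any monomial order $\prec$ one has $SM_\prec(I(S)) = \{x^u : u \in S\}$, by explicitly exhibiting for each $u \notin S$ a polynomial in $I(S)$ with leading term $x^u$: the product of falling factorials $\prod_i \binom{x_i}{u_i}$ when all $u_i<p$, and a multiple of a field polynomial $x_i^p - x_i$ otherwise. This gives an intrinsic, order-free description of the standard monomials of a staircase, from which the lemma follows at once.

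The paper instead argues by contradiction: assuming two distinct staircases share standard monomials, it picks $u \in S_i \setminus S_j$ and splits into the cases ``$u > \lambda$ for all $\lambda \in S_j$'' versus ``$u \leq \lambda$ for some $\lambda \in S_j$.'' In the componentwise partial order this dichotomy is not exhaustive ($u$ may be incomparable to some points of $S_j$ while not dominating all of them), and the argument never actually invokes the hypothesis that the standard monomial sets coincide. Your approach avoids these difficulties altogether and, as a bonus, yields the explicit identification $SM_\prec(I(S)) = \{x^u : u \in S\}$, which is of independent interest and in fact underlies the standard proof of this fact in the literature.
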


\begin{proof}
Note that linear shift is a bijection and it follows that it is an equivalence relation. Consider the equivalence class $[S]$ defined by a linear shift. Let
$[S]=\{S_{t_1},S_{t_2},...,S_{t_W}: S_{t_i}\sim S_{t_j},  \forall t_i,t_j \in \{t_1,t_2,...,t_W\} ,i\neq{j}\}$. Assume there are two distinct staircases, $S_i$ and $S_j$, with the same standard monomials and leading terms. Then there exists a point $u \in S_i$ such that $u \not\in S_j $. There are two cases to consider:

\begin{itemize}
   \item $u > \lambda$ for all $\lambda \in S_j$. Suppose the number of points in $ S_i$ is $|S_i|\geq |S_l|+1$, where $S_l=\{w \in S_i: w<u\}$. As $S_i$ is a staircase, $S_l$ will contain all the points below $u$. But for $u > \lambda$ for all $\lambda \in S_j$, we have that $|S_j|\leq |S_l|< |S_i|$.
So, $|S_i|\neq |S_j|$, contradicting the assumption that $S_i$ and $S_j$ have the same number of points.
   \item $u \leq \lambda$ for some $\lambda \in S_j$. However, by assumption $u \not \in S_j$ and so $S_j$ cannot be a staircase.
\end{itemize}

As both cases lead to contradictions, we conclude that distinct staircases have different leading terms and standard monomials.
\end{proof}



\begin{theorem}[Proposition 79 in~\cite{he2016}]
\label{thm:staircaseShift}
If  $S_1$ and $S_2$ are distinct staircases, then there is no linear shift between them.
\end{theorem}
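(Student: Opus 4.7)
The plan is to argue by contradiction, combining the invariance of leading-term ideals under linear shifts with the uniqueness statement in Lemma~\ref{thm:staircaseD}. Assume toward a contradiction that $S_1$ and $S_2$ are distinct staircases in $\{\mathbb Z_p^n\}_m$ and that there exists a linear shift $\phi(x) = (a_1 x_1 + b_1, \ldots, a_n x_n + b_n)$ with $S_1 = \phi(S_2)$.

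First, I would show that $\phi$ preserves leading-term ideals with respect to \emph{every} monomial order. The shift induces a ring automorphism $\phi^{\ast}$ of $\mathbb Z_p[x_1,\ldots,x_n]$ sending $x_i \mapsto a_i x_i + b_i$, which carries $I(S_1) = I(\phi(S_2))$ bijectively onto $I(S_2)$. For any monomial $x^\alpha$, expanding $\phi^{\ast}(x^\alpha) = \prod_i (a_i x_i + b_i)^{\alpha_i}$ produces the term $\bigl(\prod_i a_i^{\alpha_i}\bigr)\, x^\alpha$ together with monomials that strictly divide $x^\alpha$. Since every admissible monomial order refines the divisibility partial order on monomials (from $1 \preceq x^\beta$ together with multiplicativity), no term in this expansion can exceed $x^\alpha$ under $\prec$. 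Consequently $LT_\prec(\phi^{\ast}(f))$ is a nonzero scalar multiple of $LT_\prec(f)$ for every $f \in I(S_1)$ and every monomial order $\prec$, so $LT_\prec(I(S_1)) = LT_\prec(I(S_2))$ and therefore $SM_\prec(I(S_1)) = SM_\prec(I(S_2))$. This is precisely the ``model-basis annotation'' of equivalence classes alluded to in the paragraph preceding this subsection.

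Second, I would invoke Lemma~\ref{thm:staircaseD}, which states that any two distinct staircases must carry different standard monomial sets under every monomial order. Its contrapositive says that two staircases sharing a standard monomial basis for even a single $\prec$ must coincide, directly contradicting $S_1 \neq S_2$ and completing the argument.

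I expect the only real subtlety to sit in the first step: the claim that multiplying out $(a_i x_i + b_i)^{\alpha_i}$ cannot produce a monomial strictly larger than $x^\alpha$. Once that observation is in place, the rest is a clean appeal to Lemma~\ref{thm:staircaseD}. A more combinatorial alternative would analyze $\phi$ coordinate by coordinate using the fact that $(0,\ldots,0) \in S_1 \cap S_2$ (since both are nonempty staircases) and push on the closure-under-$\leq$ condition, but that route looks considerably messier than the algebraic one outlined above.
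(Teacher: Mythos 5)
Your proposal is correct and takes essentially the same route as the paper's proof: assume a linear shift exists, deduce $LT_\prec(I(S_1)) = LT_\prec(I(S_2))$, and contradict Lemma~\ref{thm:staircaseD}. The only difference is that the paper cites \cite{he2016} for the invariance of leading-term ideals under linear shifts, whereas you supply a (correct) self-contained argument for it via the induced substitution $x_i \mapsto a_i x_i + b_i$ and the fact that monomial orders refine divisibility.
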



\begin{proof}
Suppose $S_1$ and $S_2$ are distinct staircases.
By  Lemma \ref{thm:staircaseD}, $S_1$ and $S_2$ have different standard monomial bases and $LT_\prec(I(S_1))\neq LT_\prec (I(S_2))$.
But if there exists a linear shift between $S_1$ and $S_2$, then it must be that $LT_\prec(I(S_1))= LT_\prec(I(S_2)$ \cite{he2016}, hence a contradiction. Therefore, there is no linear shift between distinct staircases.
\end{proof}

Notice that Theorem \ref{thm:staircaseShift} implies that an equivalence class can contain 0 or 1 staircase.

\begin{corollary}
\label{thm:unique}
An equivalence class contains at most one staircase.
\end{corollary}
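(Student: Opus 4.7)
The plan is to prove Corollary \ref{thm:unique} as an immediate consequence of Theorem \ref{thm:staircaseShift}. Since the corollary merely repackages the content of the theorem in the language of equivalence classes, the argument should be short and by contradiction.

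First I would suppose, for the sake of contradiction, that some equivalence class $[S]\subseteq\left\{\mathbb Z_p^n\right\}_m$ contains two distinct staircases $S_1$ and $S_2$. By the very definition of the equivalence relation induced by linear shifts (Definition \ref{def:LS}), membership of $S_1$ and $S_2$ in the same class $[S]$ means $S_1\sim S_2$, i.e.\ there exists an affine map $\phi=(\phi_1,\dots,\phi_n)$ with $\phi_k(x)=a_kx+b_k$, $a_k\in\mathbb Z_p^{\times}$, $b_k\in\mathbb Z_p$, such that $S_1=\phi(S_2)$.

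Then I would invoke Theorem \ref{thm:staircaseShift}, which asserts that no linear shift exists between two distinct staircases. The existence of $\phi$ above directly contradicts this assertion. Hence no such pair $S_1,S_2$ can exist, and any equivalence class contains at most one staircase.

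There is essentially no obstacle here: the corollary is a one-line consequence of the theorem, and the only care needed is to articulate that being in the same equivalence class is, by the construction of the relation $\sim$, the same as being connected by a linear shift. I would not need any algebraic geometry machinery (ideals, Gr\"obner bases, standard monomials) in this particular argument, since all of that work has already been absorbed into the proof of Theorem \ref{thm:staircaseShift} via Lemma \ref{thm:staircaseD}.
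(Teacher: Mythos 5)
Your proposal is correct and matches the paper exactly: the paper states Corollary \ref{thm:unique} as an immediate consequence of Theorem \ref{thm:staircaseShift}, precisely the contradiction argument you give (two distinct staircases in one class would be linked by a linear shift, which the theorem forbids). No further commentary is needed.
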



Since an equivalence class contains at most one staircase, staircases are a natural choice of representatives. The ``condensedness'' of the points in staircases motivates the following definition.

\begin{definition}
\label{def:distance}
Let $S\subseteq \mathbb{Z}_p^n$. The \emph{set distance} of $S$, denoted $D(S,0)$, is  the sum of the Euclidean distances of all points in $S$ to the origin.
\end{definition}

\begin{example}
\label{ex:unique-rep}
Consider the sets in Figure \ref{fig:no-lin-shift}. The set $S=\{(0,0),(0,1),(0,2)\}$ (left in black) is the representative for its equivalence class as it has the smallest set distance, namely $D(S,0)=0+1+2=3$. The set $T=\{(1,0),(1,1),(1,2)\}$ (left in green) has  set distance $D(T,0)=1+\sqrt 2+\sqrt 5>3$. Note that $S$ is a staircase.
\end{example}

We will see that staircases have minimum set distance; however, there are sets that are not staircases that have minimum set distance.

\begin{example}
Consider the sets $S=\{(0, 0, 0), (0, 0, 1), (0, 1, 0), (1, 1, 0)\}$ and $T=\{(0, 0, 0), \allowbreak (0, 1, 0), \allowbreak (0, 1, 1), \allowbreak (1, 0, 0)\}$ in $\mathbb Z_2^3$. The set distances of $S$ and $T$ are equal: $D(S,0)=D(T,0)=2+\sqrt 2$.  We will show that these two sets have minimum set distance.  In order for a set to have a smaller set distance than $S$ and $T$, its points must have fewer ones in the entries.  In fact the only set with smaller set distance is $U=\{(0, 0, 0), (0, 0, 1), (0, 1, 0), (1, 0, 0)\}$ with $D(U,0)=3$. However there is no linear shift between $S$ and $U$. While the identity map works for the first and third coordinates, no function exists for the second coordinate: $\{0,0,1,1\} \not\mapsto
\{0,0,1,0\}$. So there is no set with a smaller set distance that is a linear shift of either of them.  Hence $S$ and $T$ have minimum set distance.
\end{example}

In order to define a representative of an equivalence class, we must order sets which share the same set distance.  Let  $S$ and $T$ be set with the same set distance.  Order the points in each set using lexicographic order (with increasing index order).  We say that $S<T$ if $p_1=q_1,\ldots,p_{k-1}=q_{k-1}$ and $p_k\prec q_k$ in lexicographic order (with increasing index order), where $p_1,\ldots,p_k\in S$ and $q_1,\ldots,q_k\in T$.

\begin{definition}
 The set $S$ is the \emph{representative} for its equivalence class $\mathcal E$ if $S$ is the unique set that has the minimum set distance in $\mathcal E$; when $S$ is not unique we have that $S<T$ for all sets $T\in \mathcal E$ with $T\neq S$ and $T$ also has minimum set distance.
\end{definition}

We note that the choice of the representative is unique, which the following example illustrates.

\begin{example}
Consider the sets $S=\{(0, 0, 0), (0, 0, 1), (0, 1, 0), (1, 1, 0)\}$ and $T=\{(0, 0, 0), \allowbreak (0, 1, 0), \allowbreak (0, 1, 1), \allowbreak (1, 0, 0)\}$ in $\mathbb Z_2^3$. We showed in the previous example that $S$ and~$T$ have minimum set distance.  As $S<T$, we have that $S$ is the representative of its equivalence class. Note that $S$ is not a staircase as $(1,0,0)<(1,1,0)$ but $(1,0,0)\not\in S$.
\end{example}

\begin{theorem}
If an equivalence class contains a staircase, then the staircase is the unique representative.
\end{theorem}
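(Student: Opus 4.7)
The plan is to use Corollary \ref{thm:unique} together with a rearrangement-style argument showing that the staircase achieves the minimum set distance in its equivalence class, strictly. The strictness will make the lex-ordered tie-break in the representative definition moot.

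By Corollary \ref{thm:unique}, the staircase $T$ in $\mathcal{E}$ is unique, so the only candidate for the representative of $\mathcal{E}$ among staircases is $T$. The key structural observation is that for each coordinate $k \in \{1,\ldots,n\}$, the multiset $V_k$ of $k$-th coordinates of the points of $T$ has a non-increasing multiplicity function $\mu_k(v) := |\{u \in T : u_k = v\}|$ on $\{0,1,\ldots,p-1\}$. This follows from downward closure: if $u \in T$ has $u_k = v$, then for each $v' < v$ the point $u-(v-v')e_k$ is also in $T$, so $\mu_k(v') \ge \mu_k(v)$. Consequently, for any affine bijection $\phi_k$ on $\mathbb{Z}_p$, the cumulative mass $\sum_{v\le r}\mu_k(\phi_k^{-1}(v))$ of $\phi_k(V_k)$ on any initial segment $\{0,\ldots,r\}$ is at most that of $V_k$, since $V_k$'s mass is maximally concentrated on the smallest values. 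This gives componentwise domination of the sorted ascending sequence of $V_k$ by that of $\phi_k(V_k)$, with strict inequality at some position whenever $\phi_k(V_k) \ne V_k$ as multisets.

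To translate these coordinate-wise dominations into the Euclidean sum-of-norms inequality, I would establish the ball-count inequality $|T \cap B_r(0)| \ge |\phi(T) \cap B_r(0)|$ for every $r \ge 0$, where $B_r(0)$ is the closed Euclidean ball of radius $r$ around the origin. This ball-count yields componentwise domination of the sorted ascending sequences of Euclidean norms of points in $T$ and $\phi(T)$, and hence $D(T,0) \le D(\phi(T),0)$ by summing dominated sequences. For strictness, one shows that if $\phi(T) \neq T$ then the staircase property forces at least one $V_k$ to be strictly altered by $\phi_k$, which strictly decreases the ball-count at some radius and yields $D(T,0) < D(\phi(T),0)$, rendering the lex tie-break unnecessary.

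The main obstacle is the ball-count step: coordinate-wise multiset domination does not in general imply Euclidean-norm domination (for instance $\{(0,3),(3,0)\}$ and $\{(0,0),(3,3)\}$ in $\mathbb{Z}_5^2$ share coordinate multisets but have very different Euclidean-norm profiles, so coordinate information alone cannot distinguish them). Resolving this step requires the downward-closed geometry of the staircase to be invoked: my approach would be to construct an injection $\phi(T) \cap B_r(0) \hookrightarrow T \cap B_r(0)$ by iteratively lowering the coordinates of each point in $\phi(T) \cap B_r(0)$ using the staircase structure of $T$ until the image lies in $T$; lowering a coordinate cannot increase the Euclidean norm, so the image remains in $B_r(0)$. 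Making this construction precise, in particular ensuring well-definedness and injectivity, is the technical crux of the argument.
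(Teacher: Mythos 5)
Your overall strategy---reduce the claim to showing that the staircase \emph{strictly} minimizes the set distance in its class, so that the lexicographic tie-break never enters---is sound, and your observation that the coordinate multiplicity functions of a staircase are non-increasing, hence that each $\phi_k$ can only move cumulative coordinate mass away from $0$, is correct. But the proof has a genuine gap exactly where you flag it: the passage from coordinate-wise domination to the Euclidean inequality $D(T,0)<D(\phi(T),0)$ is never established. The ball-count inequality $|T\cap B_r(0)|\ge|\phi(T)\cap B_r(0)|$ is only asserted, and the proposed mechanism---``iteratively lowering the coordinates of each point of $\phi(T)\cap B_r(0)$ until the image lies in $T$''---is not yet a construction: there is no rule selecting which coordinates to lower, no argument that distinct points of $\phi(T)$ land on distinct points of $T$ (collisions are easy to produce: for $T=\{(0,0),(1,0),(0,1)\}$ and $\phi$ the shift by $(1,1)$, all three points of $\phi(T)$ can be lowered to $(0,0)$), and no treatment of the mod-$p$ wraparound introduced by the affine maps. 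Producing the injection would require a matching or compression argument, which is precisely the content you defer. The strictness claim carries a second unproved step: you assert that $\phi(T)\neq T$ forces some coordinate multiset to change and that this strictly drops a ball count at some radius, but neither implication is argued; if either fails you are thrown back onto the lexicographic tie-break with no argument that the staircase wins it.

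For comparison, the paper's proof avoids this machinery entirely: it compares the two sums of norms point by point, writing each point of $S_2=\phi(S_1)$ as $(a_1x^1_{t1}+b_1,\ldots,a_nx^1_{tn}+b_n)$, subtracting off the image $(b_1,\ldots,b_n)$ of the origin, and using $a_i\ge 1$ (as integers in $\{1,\ldots,p-1\}$) to bound each $\sqrt{\sum_k (a_kx^1_{tk})^2}$ below by $\sqrt{\sum_k (x^1_{tk})^2}$. That route is far more direct than the sorted-sequence/ball-count framework you set up: the essential point is that a linear shift can only push the \emph{individual} points of a staircase away from the origin, so no rearrangement inequality over the whole set is needed. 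If you pursue your approach, you should either supply the Hall-type matching argument for the ball counts or abandon it in favor of the pointwise comparison.
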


\begin{proof}
Suppose $S_1=\{(x_{11}^1,\ldots,x_{1n}^1),\ldots, (x_{m1}^1,\ldots,x_{mn}^1)\}$ is a staircase of $m$ points and $S_2=\{(x_{11}^2,\ldots,x_{1n}^2),\ldots,(x_{m1}^2,\ldots,x_{mn}^2)\}$ is in the same equivalence class. Then the set distance of $S_2$ to the origin is
\begin{align*}
D(S_2,0) &=
\sum_{t=1}^{m} \sqrt{(x_{t1}^2)^2+\cdots +(x_{tn}^2)^2} \geq\sum_{t=1}^{m}\sqrt{(x_{t1}^2-x_{t_11}^2)^2+\cdots +(x_{tn}^2-x_{t_1n}^2)^2}\\
&\geq \sum_{t=1}^{m}\sqrt{(a_1x_{t1}^1+b_1-b_1)^2+\cdots +(a_n x_{tn}^1+b_n-b_n)^2}
>\sum_{t=1}^{m}\sqrt{(x_{t1}^1)^2+\cdots +(x_{tn}^1)^2} \\
& > D(S_1,0).
\end{align*}

Here, $a_i$ and $b_i$ are the linear shift components from $S_1$ to $S_2$, and $(x_{t_11}^2,\ldots, x_{t_1n}^2)=(b_1,\ldots,b_n)$ is the shifted origin point in $S_2$. We know that the distance of $S_1$ to the origin will always be smaller than the distance of $S_2$ to the origin for $a_i\neq 0$ for $i=1,\ldots,m$. Therefore, $S_1$ is the representative in its equivalence class and we already know from Corollary \ref{thm:unique} that there is only one staircase in each equivalence class. It follows that $S_1$ as a staircase is the representative. Since equivalence classes contain at most one staircase and the set distance of a staircase is strictly less than the set distances for all other sets in the equivalence class, this means that the staircase is the unique representative.
\end{proof}

\begin{example}
The set $S$ in Example \ref{ex:unique-rep} is a staircase and is therefore the representative of its equivalence class.
\end{example}

The definition of representative offers two valuable perspectives.  First, the property of the points in a representative set being closest to the origin has an interpretation in the context of experimental design: such points can be viewed as experimental settings with the fewest active nodes.  In systems that permit such initializations, these experiments call for all nodes to be turned off except for a critical few.  However, in systems for which it is infeasible to turn many nodes off, it is advantageous to have available other initializations that are associated with the same bases, i.e. possible interactions. The hope is that at least one of the initializations in an equivalence class is feasible.

Second, there is also a geometric interpretation: the configuration of the points in a representative set can be viewed as the standard position for sets of points in the equivalence class. This is analogous to writing equations of geometric shapes, such as ellipses, in standard form.  Then any geometric manipulations of the data, such as stretching, which can be described via linear shifts, do not change the associated model bases.

\subsection{Divisibility Properties of Equivalence Classes}
\label{sec:div-eq}

In this section, we describe some divisibility properties of equivalence classes and conclude with an upper bound for the number of equivalence classes. The main results are proved here but some of these proofs rely on results proved in the Appendix (Section \ref{apdx:div}).

We begin by recalling that according to~\cite{he2016}, if $S_1\sim S_2$, then $LT_\prec(I(S_1))=LT_\prec(I(S_2))$ and $SM_\prec(I(S_1))=SM_\prec(I(S_2))$ for any term order~$\prec$.
However, notice that if $SM_\prec(S_1)=SM_\prec(S_2)$, it does not follow that $S_1\sim S_2$ as the next example illustrates.

\begin{example}
Over $\mathbb Z_7$ the sets $\{(0,0),(1,0),(2,0)\}$ and $\{(0,0),(1,0),(3,0)\}$ have the same set of standard monomials, namely $\{1,x,x^2\}$, but there is no linear shift correspondence between them.
\end{example}

The following few results describe the structure of the linear shift matrix which was introduced in Section \ref{sec:background}.

\begin{lemma}
\label{thm:uniqueness inside column}
Let $p\not|\; m$. In each column of a linear shift matrix the data sets are distinct.
\end{lemma}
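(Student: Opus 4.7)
The plan is to argue by contradiction. Fix a column $\bm{a} = (a_1,\dots,a_n)$ and suppose that two different rows $\bm{b}$ and $\bm{b}'$ produce the same data set, i.e.\ $\mathcal{M}(S,\bm{a},\bm{b})=\mathcal{M}(S,\bm{a},\bm{b}')$. Since these are sets of $m$ points, there must exist a permutation $\sigma$ of $\{1,\dots,m\}$ matching up the points: for every $k\in\{1,\dots,m\}$ and every coordinate $i\in\{1,\dots,n\}$,
\begin{equation*}
a_i x_{ki}+b_i \;=\; a_i x_{\sigma(k)\,i}+b_i'.
\end{equation*}
Solving for the difference and using that $a_i\in\mathbb{Z}_p^\times$, this rearranges to
\begin{equation*}
x_{ki}-x_{\sigma(k)\,i} \;=\; a_i^{-1}(b_i'-b_i) \;=:\; c_i,
\end{equation*}
so the difference depends only on $i$ and not on $k$.

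Next I would sum the relation over $k=1,\dots,m$. Because $\sigma$ is a permutation, $\sum_{k=1}^m x_{\sigma(k)\,i}=\sum_{k=1}^m x_{ki}$, so the left side telescopes to $0$ while the right side equals $mc_i$. Thus $mc_i\equiv 0\pmod p$ for every $i$. This is the step that uses the hypothesis: since $p\nmid m$, the class of $m$ is a unit in $\mathbb{Z}_p$, so we may cancel to get $c_i=0$, whence $b_i'=b_i$ for all $i$, and therefore $\bm{b}=\bm{b}'$. This contradicts the assumption that the rows were distinct, completing the proof.

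I expect the only subtlety to be justifying the existence of the permutation $\sigma$ cleanly — it is immediate from set equality of $m$-point sets, but it must be stated, since the data sets are recorded as unordered sets and any individual row of $\mathcal{M}(S,\bm{a},\bm{b})$ is only defined up to relabeling. Once $\sigma$ is in hand, the reduction to the scalar identity $mc_i=0$ in $\mathbb{Z}_p$ is the crux, and the hypothesis $p\nmid m$ is used precisely to invert $m$. Conversely, the hypothesis cannot be dropped: if $p\mid m$, then one can arrange input sets closed under the translation $\bm{x}\mapsto \bm{x}+\bm{c}$ for some nonzero $\bm{c}$, producing coincidences within a column, so the bound is sharp.
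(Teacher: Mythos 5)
Your proof is correct and follows essentially the same route as the paper's: both arguments sum the $i$-th coordinates of all $m$ points of the two allegedly equal sets, use set equality (your permutation $\sigma$, the paper's "every element has a corresponding identical one") to equate the sums, and then cancel $m$ modulo $p$ using $p\nmid m$ to force $\bm{b}=\bm{b}'$. Your explicit handling of the matching permutation is a minor tightening of the same idea, not a different approach.
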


\begin{proof}
Let $n \in \mathbb Z^+$ and $p$ be prime. Let $m \in \mathbb Z^+$, $m \leq p^n$,  and $p\not|\; t$. Suppose $S$ is a data set, $S =  \{(x_{11},x_{12},\ldots,x_{1n}) , \ldots, (x_{m1},x_{m2},\ldots,x_{mn})\} $.
Assume for the sake of contradiction that $\mathcal{M}(S,\bm{a},\bm{\hat{b}}) = \mathcal{M}(S,\bm{a},\bm{b})$, where $\bm{a}=(a_1,\cdots,a_n)$, $\bm{\hat{b}} =(\hat{b}_1,\cdots,
\hat{b}_n)$, $\bm{b} = ({b}_1,\cdots,{b}_n)$, $a_i \in \mathbb Z_p \backslash \{0\}$, $b_i, \hat{b_i} \in \mathbb Z_p$, $1\leq i  \leq n$, and $\bm{\hat{b}} \neq \bm{{b}}$. The explicit expression is
 \begin{equation*}
 \begin{array}{ccc}
 \{(a_1x_{11}+\hat{b}_1,a_2x_{12}+\hat{b}_2,\cdots,a_nx_{1n}+\hat{b}_n), & \cdots,& (a_1x_{t1}+\hat{b}_1,a_2x_{t2}+\hat{b}_2,\cdots,a_nx_{tn}+\hat{b}_n)\} \\
 & \parallel  &  \\
 \{(a_1x_{11}+{b}_1,a_2x_{12}+{b}_2,\cdots,a_nx_{1n}+{b}_n), & \cdots, & (a_1x_{t1}+{b}_1,a_2x_{t2}+\overline{b}_2,\cdots,a_nx_{tn}+{b}_n)\}.
 \end{array}
 \end{equation*}
Adding all elements inside the output, the summations are identical as every element in one output has a corresponding identical one in the other output. Then, we obtain
 \begin{equation*}
 \begin{array}{cccc}
 \left(a_1\sum_{i=1}^{m}x_{i1}+m\hat{b}_1, \right.& a_2\sum_{i=1}^{m}x_{i2}+m\hat{b}_2,&\cdots,&\left.a_n\sum_{i=1}^{t}x_{in}+m\hat{b}_n\right)\\
 & \parallel  &  \\
 \left(a_1\sum_{i=1}^{m}x_{i1}+m{b}_1,\right.&a_2\sum_{i=1}^{m}x_{i2}+m{b}_2, & \cdots, &\left. a_n\sum_{i=1}^{t}x_{in}+m{b}_n\right),
 \end{array}
 \end{equation*}
 which leads to $\hat{b}_i = {b}_i$, for $1\leq i \leq n$, since $p \not|\; m$. This contradicts the assumption that $\bm{\hat{b}} \neq \bm{{b}}$. Thus, all elements in a column are unique.
\end{proof}

Now we characterize the size of an equivalence class by using Lemma \ref{thm:uniqueness inside column} and results in the Appendix.

\begin{theorem}
\label{thm:LSM}
	Let $\mathcal E$ be an equivalence class of sets in $\left\{\mathbb Z_p^n\right\}_m$ and $S\in \mathcal E$.
    The number of distinct data sets in the $i^{th}$ column of $\mathcal{M}(S)$ is of the form $p^{r_i}$, $0\leq r_i\leq n$.
\end{theorem}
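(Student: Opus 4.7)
The plan is to fix an arbitrary column index $\bm{a}\in(\mathbb{Z}_p^\times)^n$ of $\mathcal{M}(S)$ and count its distinct entries via a transitive group action. First I would set $S^{\bm{a}}:=\mathcal{M}(S,\bm{a},\bm{0})$. Expanding Definition \ref{def:LSM}, the entry in row $\bm{b}$ of that column is
\[
\mathcal{M}(S,\bm{a},\bm{b})=\{\,\bm{y}+\bm{b}:\bm{y}\in S^{\bm{a}}\,\}=S^{\bm{a}}+\bm{b},
\]
where $+$ denotes componentwise translation. Thus the distinct entries of column $\bm{a}$ correspond bijectively to the orbit of $S^{\bm{a}}$ under the natural translation action of $(\mathbb{Z}_p^n,+)$ on the collection of $m$-subsets of $\mathbb{Z}_p^n$.

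The key step will be to identify the stabilizer $H=\{\bm{c}\in\mathbb{Z}_p^n:S^{\bm{a}}+\bm{c}=S^{\bm{a}}\}$ as an $\mathbb{F}_p$-subspace. A routine verification shows $H$ is a subgroup of $(\mathbb{Z}_p^n,+)$. Here is where primality of $p$ enters: every nonzero element of $\mathbb{Z}_p^n$ has additive order $p$, so $(\mathbb{Z}_p^n,+)$ is an elementary abelian $p$-group, equivalently an $\mathbb{F}_p$-vector space of dimension $n$. Any subgroup of such a group is automatically an $\mathbb{F}_p$-subspace, since scalar multiplication by $k\in\mathbb{F}_p$ is nothing but iterated addition. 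Hence $|H|=p^{k}$ for some integer $0\leq k\leq n$.

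Finally, the orbit-stabilizer theorem applied to the translation action of $\mathbb{Z}_p^n$ yields that the number of distinct sets in column $\bm{a}$ equals $|\mathbb{Z}_p^n|/|H|=p^{n-k}$, so $r_i:=n-k$ does the job. The only subtle point is the reduction to pure translation: once the scaling $\bm{a}$ is fixed, varying $\bm{b}$ only translates the single set $S^{\bm{a}}$, so the structure of the action does not depend on $\bm{a}$. Everything else is elementary group theory. As a sanity check, Lemma \ref{thm:uniqueness inside column} is recovered: when $p\nmid m$, summing coordinates shows $H=\{\bm{0}\}$, giving $r_i=n$ and hence $p^n$ distinct entries per column.
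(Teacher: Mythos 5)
Your argument is correct, and it reaches the conclusion by a cleaner route than the paper. The paper first establishes (Lemma \ref{thm:same-repeat}) that every data set occurring in a fixed column of $\mathcal{M}(S)$ occurs the same number of times $\alpha$, via an explicit construction of $\alpha_X(p-1)+1$ distinct translations carrying one set to another, and then deduces the theorem from $\gamma\alpha=p^n$ and the primality of $p$. You instead observe directly that the column is the orbit of $S^{\bm{a}}=\mathcal{M}(S,\bm{a},\bm{0})$ under the translation action of $(\mathbb{Z}_p^n,+)$, identify the stabilizer $H$ as a subgroup of an elementary abelian $p$-group (so $|H|=p^k$ by Lagrange, with no need to view it as a subspace), and invoke orbit--stabilizer to get $p^{n-k}$ distinct entries. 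This is the same underlying mechanism --- the paper's equal-multiplicity lemma is precisely the statement that the fibers of $\bm{b}\mapsto S^{\bm{a}}+\bm{b}$ are cosets of $H$ --- but your packaging replaces the hand-rolled counting of maps in Lemma \ref{thm:same-repeat} with a standard theorem, which both shortens the proof and makes the value of $r_i$ explicit as $n-\dim H$. Your closing remark correctly recovers Lemma \ref{thm:uniqueness inside column} as the special case $H=\{\bm{0}\}$. The only point worth stating explicitly (though it is immediate) is that the orbit of $S^{\bm{a}}$ is exactly the set of entries of column $\bm{a}$ because $\bm{b}$ ranges over all of $\mathbb{Z}_p^n$ in the rows of $\mathcal{M}(S)$; with that said, the proof is complete.
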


\begin{theorem}\label{thm:factors}
	Let $\mathcal E$ be an equivalence class of sets in $\left\{\mathbb Z_p^n\right\}_m$ and $S\in \mathcal E$. Then $|\mathcal E|$ has $p^s$ as a factor, where $s = \min(r_1,r_2,\ldots,r_{(p-1)^n})$ and the $r_i$'s are the same as those in Theorem  \ref{thm:LSM} (i.e. the number of distinct  sets in the $i^{th}$ column of $\mathcal{M}(S)$ is $p^{r_i}$, $0\leq r_i\leq n$).
\end{theorem}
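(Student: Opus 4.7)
The plan is to view the equivalence class $\mathcal{E}$ as a disjoint union of orbits under the natural action of the additive group $(\mathbb{Z}_p^n,+)$, where each orbit coincides with the set of distinct entries in a single column of the linear shift matrix $\mathcal{M}(S)$. Since Theorem \ref{thm:LSM} already tells us each such orbit has size $p^{r_i}$, divisibility of $|\mathcal{E}|$ by $p^s$ then follows from additivity of orbit sizes in a partition.

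First, I would define a group action of $(\mathbb{Z}_p^n,+)$ on $\mathcal{E}$ by $\bm{b}_0 \cdot T = \{\bm{x}+\bm{b}_0 : \bm{x}\in T\}$. This is well-defined on $\mathcal{E}$ because translation by $\bm{b}_0$ is itself a linear shift (take $\bm{a}=\bm{1}$ and constant $\bm{b}_0$), so composing with the linear shift carrying $S$ to $T$ produces another linear shift of $S$; hence $\bm{b}_0\cdot T \in \mathcal{E}$ whenever $T\in\mathcal{E}$.

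Second, I would identify orbits with columns. For each $\bm{a}\in(\mathbb{Z}_p^\times)^n$, let $S_{\bm{a}}=\mathcal{M}(S,\bm{a},\bm{0})$. The orbit of $S_{\bm{a}}$ under the translation action is precisely $\{\mathcal{M}(S,\bm{a},\bm{b}) : \bm{b}\in\mathbb{Z}_p^n\}$, i.e., the distinct entries appearing in the column of $\mathcal{M}(S)$ indexed by $\bm{a}$. By Theorem \ref{thm:LSM}, this orbit has cardinality $p^{r_i}$ for the corresponding column index $i$. Since every element of $\mathcal{E}$ arises as some $\mathcal{M}(S,\bm{a},\bm{b})$, the union $\mathcal{E}=\bigcup_{\bm{a}} \mathrm{orbit}(S_{\bm{a}})$ covers the whole class, and because orbits under a group action are either equal or disjoint, after removing duplicates this is a genuine partition of $\mathcal{E}$.

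Third, each block of this partition has size $p^{r_i}$ for some $i\in\{1,\ldots,(p-1)^n\}$, and since $r_i\geq s$ by definition of $s$, each block size is divisible by $p^s$. Summing over the disjoint blocks exhibits $|\mathcal{E}|$ as a sum of multiples of $p^s$, yielding $p^s \mid |\mathcal{E}|$. The main obstacle is a bookkeeping point rather than a conceptual one: one must verify carefully that the translation action is well-defined on $\mathcal{E}$ and that the identification ``column $\bm{a}$ equals orbit of $S_{\bm{a}}$'' aligns exactly with Definition \ref{def:LSM}. Once this identification is established, the divisibility claim is an immediate consequence of Theorem \ref{thm:LSM} together with the orbit-partition decomposition.
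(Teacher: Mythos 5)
Your proof is correct, and it arrives at the same decomposition the paper uses --- the equivalence class $\mathcal E$ is exhibited as a disjoint union of ``column classes'' of the linear shift matrix, each of size $p^{r_i}$ and hence divisible by $p^s$ --- but you justify the decomposition by a genuinely different route. The paper establishes disjointness via the ad hoc Lemma~\ref{newthm:inter-columns} (two columns sharing an entry contain exactly the same data sets, proved by a direct computation with the shift parameters), and it obtains the column sizes $p^{r_i}$ from Lemma~\ref{thm:same-repeat} (every set in a column appears equally often, so $\gamma\alpha=p^n$). You instead observe that translation by $\bm{b}$ defines an action of the group $(\mathbb Z_p^n,+)$ under which $\mathcal E$ is invariant, that the distinct entries of column $\bm{a}$ are exactly the orbit of $\mathcal{M}(S,\bm{a},\bm{0})$, and that orbits of a group action are automatically equal or disjoint. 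This buys a cleaner argument: the equal-or-disjoint dichotomy comes for free, and --- although you chose to cite Theorem~\ref{thm:LSM} rather than reprove it --- the orbit--stabilizer theorem would also give you the column sizes $p^{r_i}$ directly (orbit sizes divide $|\mathbb Z_p^n|=p^n$), making both appendix lemmas unnecessary for this result. The paper's more hands-on lemmas, on the other hand, yield the finer information that repetitions within a column are uniform, which the orbit framing also encodes (via stabilizers) but which your write-up does not need.
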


\begin{proof}[Proof of Thms \ref{thm:LSM}, \ref{thm:factors}]
	Fix an input data set $S$ and consider column $\mathcal{M}(S,\bm{a})$ of the linear shift matrix $\mathcal{M}(S)$. According to Theorem  \ref{thm:same-repeat}, the sets in a column of a linear shift matrix appear an equal number of times. Let  $\alpha$ be the number of appearances of each set in the column $\mathcal{M}(S,\bm{a})$. Let $\gamma$ be the number of unique sets in the column. Then we have
	$$\gamma \alpha =p^n,$$
	which implies $\gamma=p^r$ for some $0\leq r\leq n$. As the size of each column of the linear shift matrix is in the form $p^r$ for some $0\leq r\leq n$, there exist $0\le r_1,r_2,\ldots,r_{(p-1)^n}\le n$ such that the number of distinct input data sets in each column is $p^{r_1},p^{r_2},\ldots, p^{r_{(p-1)^n}}$. It follows from Theorem  \ref{newthm:inter-columns} that the sizes of all equivalence classes have a factor $p^s$, where $s = \min(r_1,r_2,\ldots,r_{(p-1)^n}).$

\end{proof}

\begin{theorem}\label{TFAE}
Let $\mathcal E$ be an equivalence class of sets in $\left\{\mathbb Z_p^n\right\}_m$ and $S\in \mathcal E$. The following are equivalent:
\begin{enumerate}
    \item there exists $1\leq i\leq (p-1)^n$ such that $r_i=0$,
    \item $m=p^n$,
    \item $r_i=0$ for all $1\leq i\leq (p-1)^n$,
\end{enumerate}
where the $r_i$'s are the same as those as in Theorem  \ref{thm:LSM} (i.e. the number of distinct sets in the $i^{th}$ column of $\mathcal{M}(S)$ is $p^{r_i}$, $0\leq r_i\leq n$).

\end{theorem}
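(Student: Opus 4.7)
The plan is to prove the cycle of implications $(3)\Rightarrow(1)\Rightarrow(2)\Rightarrow(3)$. The implication $(3)\Rightarrow(1)$ is immediate, and $(2)\Rightarrow(3)$ is also easy: if $m=p^n$, then the only input set of size $m$ in $\mathbb{Z}_p^n$ is $S=\mathbb{Z}_p^n$ itself. Since each linear shift $\phi(x)=(a_1x_1+b_1,\ldots,a_nx_n+b_n)$ with $a_i\in\mathbb{Z}_p^\times$, $b_i\in\mathbb{Z}_p$ is a bijection on $\mathbb{Z}_p^n$, every entry of $\mathcal{M}(S)$ equals $\mathbb{Z}_p^n$. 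Thus each column contains a single distinct set, forcing $p^{r_i}=1$, i.e.\ $r_i=0$, for all $i$.

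The heart of the argument is $(1)\Rightarrow(2)$. I fix a column index $i$ corresponding to some $\bm{a}=(a_1,\ldots,a_n)$ with all $a_k\ne 0$ and assume the column contains a unique set. Writing $T:=\mathcal{M}(S,\bm{a},\bm{0})=\{(a_1x_{u1},\ldots,a_nx_{un}):u=1,\ldots,m\}$, the entry in row $\bm{b}$ is the translate $T+\bm{b}:=\{(a_1x_{u1}+b_1,\ldots,a_nx_{un}+b_n):u=1,\ldots,m\}$. Uniqueness in the column then says $T+\bm{b}=T$ for every $\bm{b}\in\mathbb{Z}_p^n$. Picking any point $t\in T$, we get $t+\bm{b}\in T$ for all $\bm{b}\in\mathbb{Z}_p^n$, so $T=\mathbb{Z}_p^n$. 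Because the map $(x_1,\ldots,x_n)\mapsto(a_1x_1,\ldots,a_nx_n)$ is a bijection of $\mathbb{Z}_p^n$, it preserves cardinality, so $m=|S|=|T|=p^n$.

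The only subtle point is the bijectivity claim that allows one to conclude $|T|=|S|$: this is immediate because each coordinate map $x_k\mapsto a_kx_k$ is invertible when $a_k\in\mathbb{Z}_p^\times$, so distinct points of $S$ are sent to distinct points of $T$. No appeal to the appendix or to the column-repetition result (Theorem~\ref{thm:same-repeat}) is needed for this direction; the chain of implications closes cleanly with only the definitions of $\mathcal{M}(S)$ and of linear shifts. The main obstacle, if any, is simply recognizing that ``collapse of an entire column'' is equivalent to translation-invariance of a subset of $\mathbb{Z}_p^n$, which then forces the subset to be the whole ambient space.
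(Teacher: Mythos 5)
Your proposal is correct and follows essentially the same route as the paper: the trivial implication, the observation that $m=p^n$ forces a single data set (hence a single entry per column), and the key step that a column with one distinct entry means the set is invariant under all translations of $\mathbb{Z}_p^n$ and so must be all of $\mathbb{Z}_p^n$. The only cosmetic difference is that you argue $(1)\Rightarrow(2)$ directly via the orbit of a point under translation, whereas the paper phrases the same idea as a contradiction by exhibiting $\bm{b}=\bm{\alpha}-\bm{\beta}$ with $\bm{\alpha}\notin S'$, $\bm{\beta}\in S'$.
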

\begin{proof}
    (2) $\implies$ (3):
    Let $n \in \mathbb Z^+$ and $p$ be prime. Assume $m=p^n$. Then $\big|\{\mathbb Z_p^n\}_m\big| = \frac{m}{p^n}  =~1$, which implies that there is only one equivalence class of size one. Thus, there is only one distinct set in each column (i.e. $p^{r_i} = 1$). So, $r=0$ for all $1\leq i\leq (p-1)^n$.

    (3) $\implies$ (1) is straightforward.

    (1) $\implies$ (2):
	Assume $r_i=0$ for some $1\leq i\leq (p-1)^n$. That is, all entries of the $i$-th column $\mathcal{M}(S,\bm{a})$ are the same. Let $S'$ denote the unique set of the $i$-th column. Assume for the sake of contradiction that $m \neq p^n$. Then there exist two distinct elements $\bm{\alpha},\bm{\beta}\in\mathbb Z_p^n$ such that $\bm{\alpha} \notin S'$ and $\bm{\beta} \in S'$. By assumption, all sets of the $i$-th column are the same ($S'$). However, there exists $\bm{b} \in \mathbb Z_p^n$ such that $\bm{b} = \bm{\alpha} - \bm{\beta} \pmod p$. Then, the set $S''=S'+\bm{b}$ is also in the $i$-th column of $\mathcal{M}(S)$. However, note that $\bm{\alpha}=\bm{\beta}+\bm{b} \in S''$, which implies $S'' \neq S'$. Thus, there are at least two distinct sets in the $i$-th column, contradicting that all sets in the $i$-th column are the same. Therefore, $m=p^n$.
\end{proof}

\begin{theorem}
\label{thm:divisibility of p^n}
	Let $\mathcal E$ be an equivalence class. Let $p \not|\; m$. Then $|\mathcal E|$ has $p^n$ as a factor.
\end{theorem}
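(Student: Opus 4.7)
The plan is to chain together the three preceding structural results about the linear shift matrix. Specifically, Lemma \ref{thm:uniqueness inside column} gives uniqueness within each column under the hypothesis $p \nmid m$, Theorem \ref{thm:LSM} expresses the column size as a power of $p$, and Theorem \ref{thm:factors} then yields the divisibility statement for $|\mathcal{E}|$.

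First, I would fix any $S \in \mathcal{E}$ and form the linear shift matrix $\mathcal{M}(S)$, which has $(p-1)^n$ columns, each containing $p^n$ entries (indexed by the choices of $\bm{b} \in \mathbb{Z}_p^n$). Since $p \nmid m$, Lemma \ref{thm:uniqueness inside column} applies to every column: the $p^n$ entries in each column are pairwise distinct. Hence column $i$ contains exactly $p^n$ distinct data sets.

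Next, Theorem \ref{thm:LSM} tells us that the number of distinct data sets in column $i$ is of the form $p^{r_i}$ with $0 \leq r_i \leq n$. Combining this with the count of $p^n$ distinct sets from the previous step forces $p^{r_i} = p^n$, and therefore $r_i = n$ for every $i \in \{1, \ldots, (p-1)^n\}$. Consequently $s := \min(r_1, r_2, \ldots, r_{(p-1)^n}) = n$.

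Finally, I would invoke Theorem \ref{thm:factors}, which states that $|\mathcal{E}|$ has $p^s$ as a factor. With $s = n$ from the previous paragraph, this immediately yields $p^n \mid |\mathcal{E}|$, completing the argument. There is no real obstacle here since the result follows from assembling prior statements; the only thing to be careful about is making explicit that the hypothesis $p \nmid m$ is used solely to promote the column-size bound of Theorem \ref{thm:LSM} from ``some $p^{r_i}$'' to the maximal value $p^n$.
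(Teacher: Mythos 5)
Your proof is correct and follows essentially the same route as the paper: both hinge on Lemma \ref{thm:uniqueness inside column} giving $p^n$ distinct data sets per column when $p \nmid m$, and then on the column structure of $\mathcal{M}(S)$ to conclude divisibility. The only cosmetic difference is that you reach the conclusion by feeding $r_i = n$ into Theorems \ref{thm:LSM} and \ref{thm:factors}, whereas the paper argues directly that distinct columns are either identical or disjoint so each contributes $p^n$ or $0$ new sets; these are the same underlying argument.
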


\begin{proof}
	Based on Lemma \ref{thm:uniqueness inside column} and Theorem \ref{thm:same-repeat}, the size of an equivalence class, which is the number of distinct entries in the linear shift matrix, is a multiple of $p^n$ since each column with $p^n$ entries either adds $p^n$ distinct entries (no repetitions with previous entries) or none (all entries in the column have already appeared). Thus, the sizes of each equivalence class has $p^n$ as a factor.
\end{proof}

For given $p$, $n$ and, $m$, the number of data sets is $\sum_{i=1}^{t} |\mathcal{E}_i| = \dbinom {p^n}{m}$, where $\mathcal{E}_i$, $i=1,\dots,t$ are the equivalence classes. If $m=p^n$, we have $t=1$ and the size of the unique equivalence class is 1. Otherwise (if $m<p^n$), according to Theorems \ref{thm:factors} and \ref{TFAE}, each $|\mathcal E_i|$ has $p$ as a factor, so $|\mathcal E_i| \geq p$ for all $1\leq i \leq t $. Thus, the number of equivalence classes is $t \leq \dbinom {p^n}{m}/p$, which provides an upper bound. In particular if $p \not|\; m$, then the number of  equivalence classes is $t \leq \dbinom {p^n}{m}/p^n$ by Theorem  \ref{thm:divisibility of p^n}. 

\begin{corollary}
When $m=p^n$, there is a unique equivalence class. When $m<p^n$, an upper bound for the number of equivalence classes is $\dbinom {p^n}{m}/p$.
Specifically, if $m<p^n$ and $p \not|\; m$, the upper bound for the number of equivalence classes reduces to be $\dbinom {p^n}{m}/p^n$.
\end{corollary}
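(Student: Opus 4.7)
The plan is to read this corollary as essentially a repackaging of Theorems \ref{thm:factors}, \ref{TFAE}, and \ref{thm:divisibility of p^n}, together with the elementary identity $\sum_i |\mathcal{E}_i| = \binom{p^n}{m}$, where the $\mathcal{E}_i$ exhaust the equivalence classes in $\{\mathbb{Z}_p^n\}_m$. So the proof will be a short three-part argument, one for each clause of the statement, with no real surprises.

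First, for the case $m = p^n$, I would observe that $\{\mathbb{Z}_p^n\}_m$ contains the single set $\mathbb{Z}_p^n$, because there is only one subset of $\mathbb{Z}_p^n$ of cardinality $p^n$. Since the equivalence classes partition $\{\mathbb{Z}_p^n\}_m$, there must be exactly one class and it has size $1$. (This also matches Theorem \ref{TFAE}: when $m = p^n$, all $r_i = 0$, so each column of the linear shift matrix contains just one distinct set.)

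Second, for the general bound with $m < p^n$, I would combine Theorems \ref{thm:factors} and \ref{TFAE}. By Theorem \ref{TFAE}, since $m \neq p^n$, we cannot have $r_i = 0$ for any $i$; hence $r_i \geq 1$ for every $i$, and therefore $s := \min(r_1,\ldots,r_{(p-1)^n}) \geq 1$. Theorem \ref{thm:factors} then gives $p^s \mid |\mathcal{E}_i|$ for every class, and in particular $|\mathcal{E}_i| \geq p$. Summing over all $t$ classes,
\begin{equation*}
\binom{p^n}{m} = \sum_{i=1}^{t} |\mathcal{E}_i| \geq tp,
\end{equation*}
so $t \leq \binom{p^n}{m}/p$, as claimed.

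Third, under the extra hypothesis $p \nmid m$, I would invoke Theorem \ref{thm:divisibility of p^n} in place of Theorem \ref{thm:factors}: it strengthens the divisibility from $p \mid |\mathcal{E}_i|$ to $p^n \mid |\mathcal{E}_i|$, so $|\mathcal{E}_i| \geq p^n$ for each class, and the same summation yields the sharper bound $t \leq \binom{p^n}{m}/p^n$. The only thing to take care of is that Theorem \ref{thm:divisibility of p^n} is stated without the hypothesis $m < p^n$, but since $m < p^n$ is assumed in this clause and also $p \nmid m$ (so certainly $m \neq p^n$), it applies without modification. There is no genuine obstacle here; the substantive work has already been done in the preceding theorems, and the corollary just bookkeeps the implications for $t$.
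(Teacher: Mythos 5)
Your proposal is correct and follows essentially the same route as the paper: the paper likewise combines the partition identity $\sum_i |\mathcal E_i| = \binom{p^n}{m}$ with Theorems \ref{thm:factors} and \ref{TFAE} to get $p \mid |\mathcal E_i|$ when $m < p^n$, and with Theorem \ref{thm:divisibility of p^n} to get $p^n \mid |\mathcal E_i|$ when $p \nmid m$. Your added remarks (the uniqueness of the single class when $m = p^n$, and the applicability check for Theorem \ref{thm:divisibility of p^n}) are consistent with, and slightly more careful than, the paper's own exposition.
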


\section{DoEMS: Linking Design of Experiments and Model Selection}
\label{sec:web-introduction}

To facilitate linking design of experiments and model selection, we built a database of all annotated equivalence classes of input data sets in $\left\{\mathbb Z_p^n\right\}_m$ for the cases $p=2$, $2\leq n\leq 4$, and $1\leq m\leq p^n$; and for the cases $p=3$, $n=2$, and $1\leq m\leq p^n$. The database is a linkage of two tables, described below.

The first table contains all equivalence classes of input data sets for each value of $p$, $n$, and~$m$; see the algorithm in Table \ref{alg1} in the Appendix. The equivalence classes are indexed by a unique identifier called a \texttt{classlabel}.  For each set $S$ in a row of the table (for fixed $p,n,m$), the columns are the \texttt{classlabel}, with which the other members of the equivalence class~$\mathcal E$ can be retrieved, and whether $S$ is the representative for $\mathcal E$; see the algorithm in Table \ref{alg2} in the Appendix.  Flagging input data sets which are representatives permits efficient querying of the resulting databases (see details below).

The second table contains all model bases associated to input data sets for each value of $p$,~$n$, and~$m$. For each set $S$ in a row of the table (for fixed $p,n,m$), the columns are the number of model bases associated to $S$ as well as the model bases represented as a nested set of standard monomials.  For completion's sake and for broader applicability to research involving computational algebraic geometry, we included the corresponding minimal generators of the leading term ideals and the reduced Gr\"obner bases for the ideal $I(S)$ of the points in $S$.  Given a set $S$, one reduced Gr\"obner basis $G$ for $I(S)$ (with respect to any monomial ordering) was computed using the ``Points'' package~\cite{PointsSource} in the computer algebra system Macaulay2~\cite{M2}.  Given $G$, the full set of Gr\"obner bases, leading term ideals, and sets of standard monomials were computed using the Macaulay2 ``gfanInterface'' package~\cite{gfanInterfaceSource}, which calls the software Gfan~\cite{gfan}.

The tables are linked via data sets to form a database of annotated equivalence classes.
The database is managed by SQLite~\cite{sql}, a C-language library that implements an efficient SQL database engine. The records in the SQLite table are organized by data sets and the corresponding fields are the columns from the previously generated tables (see Figure~\ref{fig:results-table}).
To access the database, we developed the website ``DoEMS: Linking Design of Experiments and Model Selection'' using PHP \cite{php} and which is publicly available at \texttt{https://s2.smu.edu/doems}. Finally, we use Python~\cite{python} to query the database. The flow chart in Figure \ref{fig:DoMESflow} illustrates computational paths in querying the database.

\begin{figure}[ht]
\centering
\includegraphics[width=5cm]{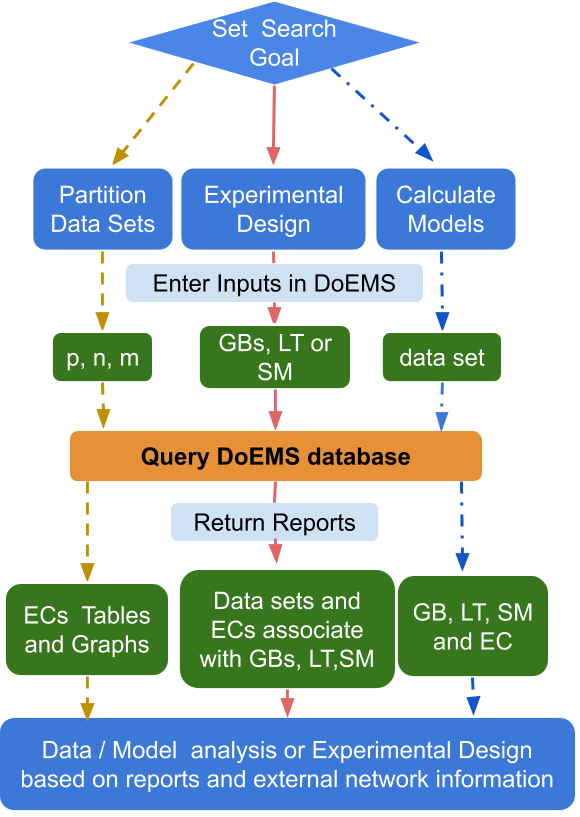}
\caption{Flowchart of computational paths with DoEMS.}
\label{fig:DoMESflow}
\end{figure}

\begin{table}[ht]
\begin{tabular}{c|l} \hline\hline
\multicolumn{2}{l}{\textbf{\large{Querying the Database}}}\\ \hline \hline
   \textit{Required inputs} & $p,n$ \\ \hline
   \textit{Optional inputs} & \textbf{data-centric}: $m$, data sets, is data set a representative?\\
   & \textbf{model-centric}: \#bases, bases, leading terms, Gr\"obner bases\\
   & \textbf{database-centric}: \texttt{classlabel} \\ \hline
\end{tabular}
\caption{Querying the database with two required inputs as well as data-, model-, and database-centric optional inputs.}
\label{query}
\end{table}

We implemented multiple ways to visualize the equivalence classes and the corresponding model bases; see Figure \ref{query}.
When only the number $p$ of states  and the number $n$ of nodes are given, the output is a table of all of the annotated equivalence classes; for example see Figure \ref{fig:results-table}.  When more inputs including the number $m$ of points are given, summary tables are provided. For example, Figure
\ref{fig:summary}  displays the number of equivalence classes, their representatives, the corresponding model bases, and the unique \texttt{classlabel} identifier for $p=3, n=2$, and $m=4$. Furthermore, we see that the 126 data sets of size 4 are partitioned in  7 equivalence classes.

\begin{figure}[ht]
\centering
\includegraphics[width=\textwidth]{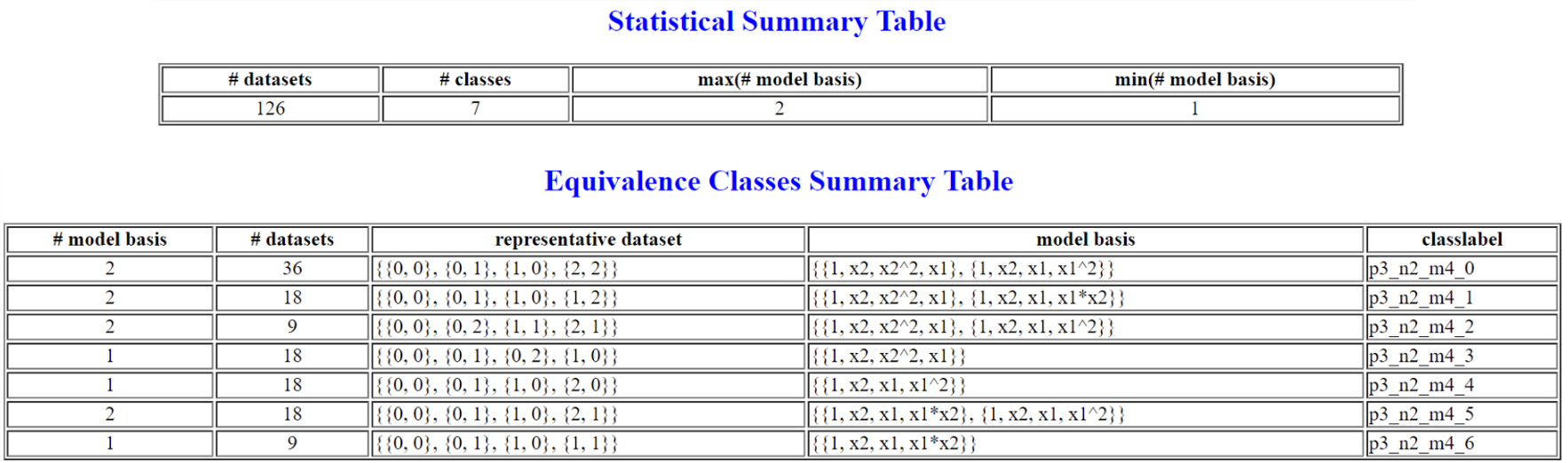}
\caption{Examples of two of the summary tables for $p=3, n=2, m=4$.  Displayed information includes the minimum and maximum number of bases among all equivalence classes, the number of equivalence classes, their representatives, the corresponding model bases, and the unique  \texttt{classlabel} identifier.}
\label{fig:summary}
\end{figure}

\begin{figure}[ht]
\centering
\includegraphics[width=0.5\textwidth]{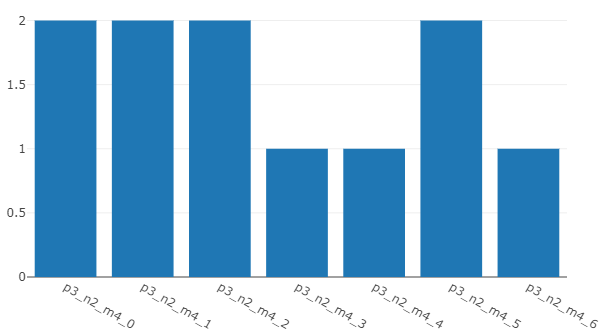}
\includegraphics[width=0.45\textwidth]{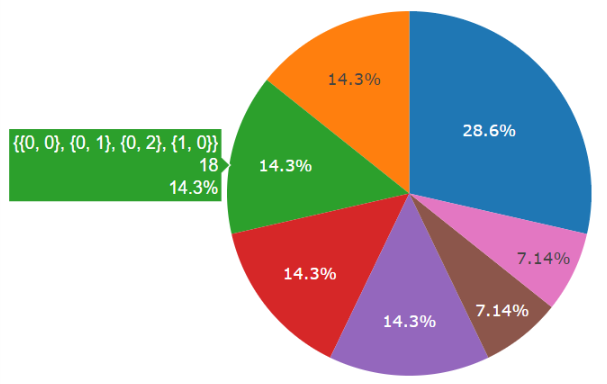}
\caption{Accompanying graphs for for $p=3, n=2, m=4$. Left: the number of model bases for each of the seven equivalence classes, arranged by the \texttt{classlabel}. Right: the relative sizes of the equivalence classes, along with their representatives (only one representative is displayed here).}
\label{fig:num-bases-ds}
\end{figure}

In addition, a bar graph and a pie chart accompany the summary tables when $m$ is given: the former shows the number of bases associated with each equivalence class while the latter shows the relative sizes of the equivalence classes along with their representatives; see Figure \ref{fig:num-bases-ds}.

The final display of results is a table listing all of the equivalence classes for data sets of size $m$ (and that satisfy other optional input).  This table is focused on input data sets. In particular, for each input data set, the following information is displayed: the number of associated model bases, the bases, the \texttt{classlabel} for its equivalence class, and whether the input data set is the representative for the class.  For those interested in algebraic-geometry computations, the table includes the minimal generators for the leading term ideals and the associated reduced Gr\"obner bases.
\begin{figure}[ht]
\centering
\includegraphics[width=\textwidth]{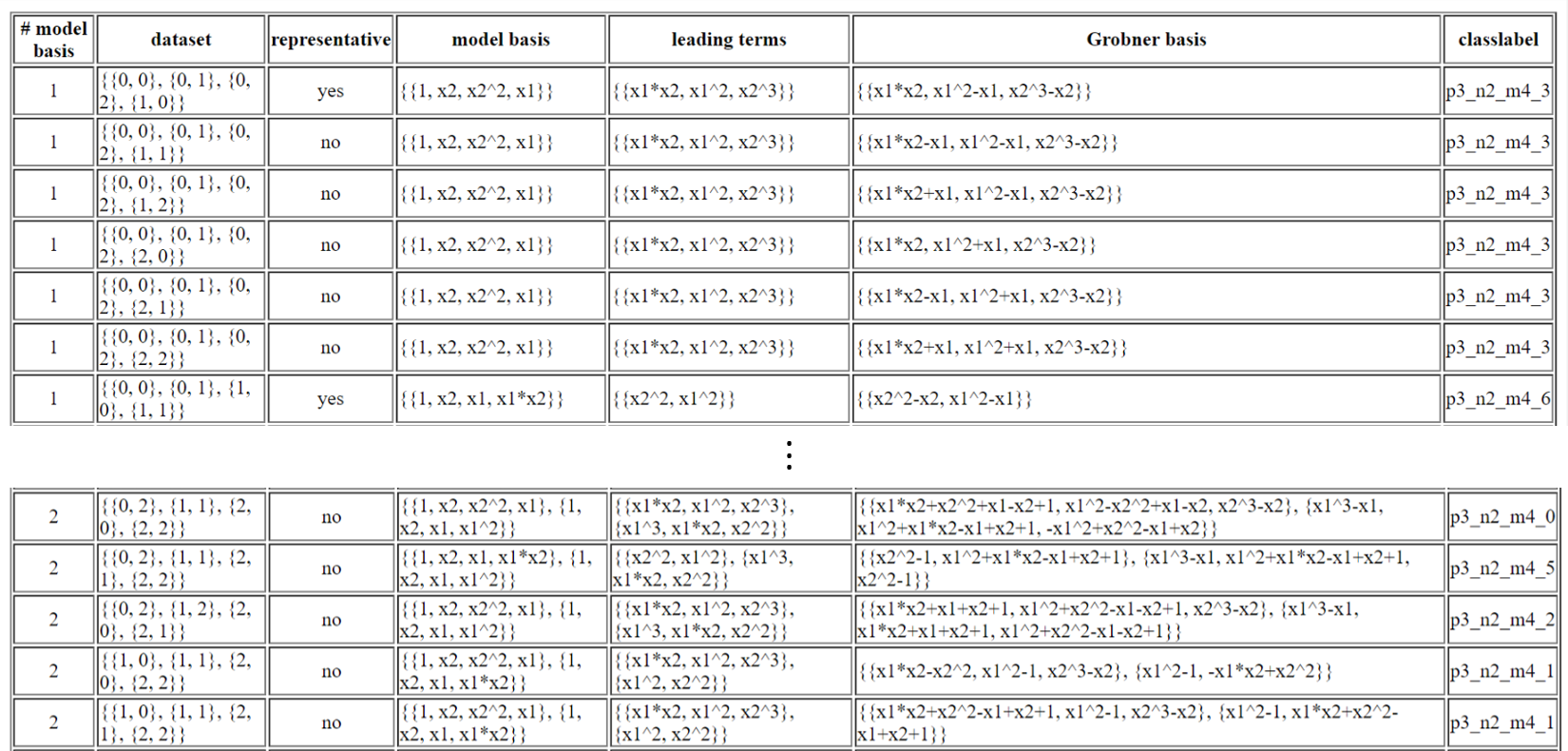}
\caption{A listing of all equivalence classes for $p=3, n=2, m=4$, arranged by input data set. For each input data set, included are the number of associated model bases, the bases, the \texttt{classlabel} for its equivalence class, and whether the input data set is the representative for the class.  Additionally, the minimal generators for the leading term ideals and the associated reduced Gr\"obner bases are provided.}
\label{fig:results-table}
\end{figure}

\section{Application: EGFR Inhibition Model on Tumor Growth}
\label{sec:applications}

In this section, we return to the two guiding questions posed at the beginning and demonstrate ways to address them in the context of a biological network. In particular we use observed interactions to design experiments  as well as use a given input data set to propose interactions which can be explained by the data.

In \cite{Steinway2016} the authors presented a Boolean model of a signaling network mediated by epidermal derived growth factor receptor (EGFR) and showed how an EGFR inhibitor suppresses tumor growth. We see from the wiring diagram in Figure \ref{fig:EGFRBoolean} that Rkip and Kras both directly affect Raf1, which in turn affects Proliferation. Here we will focus on the effect of the variables on the terminal node Proliferation and not consider the regulatory effects of the parameters.

\begin{figure}[ht]
\centering
\includegraphics[width=3.25cm]{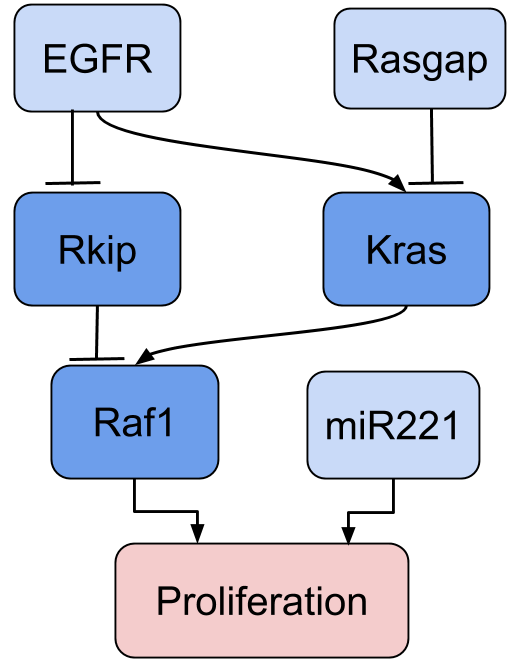}
\caption{Wiring diagram for the EFGR model. The model includes three parameters (EGFR, Rasgap, and miR221), and four variables (Rkip, Kras, Raf1, and Proliferation).  The variable Proliferation is a proxy for tumor growth. }
\label{fig:EGFRBoolean}
\end{figure}

\subsection{Using Interactions to Select Data}

We address the question of selecting data which identify a given interaction.
While the regulation by Rkip and Kras on Raf1 may be independent or coordinated, for the sake of the example, we assume the regulation is coordinated; so we have a known interaction between  Rkip and Kras.   In an effort to simplify the notation, we make the following substitutions: variables $x_1:=$ Rkip, $x_2:=$ Kras, $x_3:=$ Raf1, and $x_4:=$ Proliferation; and parameters $E:=$ EGFR, $R:=$ Rasgap, and $M:=$miR221. So the interaction between  Rkip and Kras is represented by the monomial $x_1x_2$. We aim to discover which input data sets identify the interaction $x_1x_2$.

The foundation for the presented results is a collection of equivalence classes of input data sets of a fixed size, where the equivalence classes are annotated by model bases and represented by input data sets in standard position.  We start with the monomial $x_1x_2$ being in a model basis~$B$.  Given that model bases must be closed as they are sets of standard monomials, then any monomial which divides $x_1x_2$ must be included. So we have that $B=\{1, x_1, x_2, x_1x_2\}$.  We also include $x_3$ in $B$ to allow for regulation by Raf1, resulting in $B=\{1, x_1, x_2, x_3, x_1x_2\}$. Since we have a model basis consisting of 5 monomials, we know we seek an input data set with 5 points.

Given $p=2$, $n=3$, and $m=5$, we know that there are $\binom{2^3}{5}=56$ possible sets of 5 points in~$\mathbb Z_2^3$.  Searching the database in DoEMS, we find that these~56 data sets are partitioned into~7 equivalence classes.  Refining the search further by including $x_1x_2$ in a model basis results in~32 data sets partitioned into~4 equivalence classes, seen below.

\begin{table}[ht]
\begin{center}
 \begin{tabular}{|c|c|c|c|}
 \hline
\#Bases &	Model Bases & Set in Standard Position & \#Sets	
\\ \hline
1&		$\{1, x_3, x_2, x_1, x_1x_2\}$ & $\{000,001,010,100,110\}$ & 8\\	\hline

2&		$\{1, x_3, x_2, x_1, x_1x_2\}$ &	$\{000,001,011,100,110\}$ & 8\\

&	 $\{1, x_3, x_2, x_2x_3, x_1\}$ &
 &\\	\hline

2&	$\{1, x_3, x_2, x_1, x_1x_2\}$ &	$\{000,001,010,101,110\}$ & 8\\

&	$\{1, x_3, x_2, x_1, x_1x_3\}$ & &	\\	\hline

&	$\{1, x_3, x_2, x_1, x_1x_2\}$ & & \\

3&	$\{1, x_3, x_2, x_1, x_1x_3\}$ & $\{000,001,010,100,111\}$ & 8	\\	

&	$\{1, x_3, x_2, x_2x_3, x_1\}$ & &	\\	\hline

\end{tabular}
\end{center}
\caption{The four equivalence classes associated with the model basis $\{1, x_1, x_2, x_3, x_1x_2\}$. Each blocked row represents an equivalence class.  The input data sets listed in the third column are the representatives of the equivalence classes and are written with commas and parentheses suppressed. The last column indicates the number of sets in each equivalence class.}
\label{table:equsEGFR_example1}
\end{table}

Notice that the basis $B$ is uniquely identified by the input data set $S_1=\{000, 001, \allowbreak 010, 100, \allowbreak 110\}$ in the first row of Table \ref{table:equsEGFR_example1}, as well as seven other sets which are linear shifts of $S_1$; we denote this equivalence class as $\mathcal E_1$.  For example $S'_1=\{111,110,101,011,001\}$ is another set in $\mathcal E_1$ as the mapping $\phi=(x+1,y+1,z+1):S_1\rightarrow S'_1$ is a linear shift.  In fact $S'_1$ has the largest set distance among the members of $\mathcal E_1$, whereas $S_1$ has the smallest set distance and thereby is the representative. An interpretation is that $S_1$ represents the set of experiments with the fewest active nodes while $S'_1$ has the most active nodes, as shown in Figure \ref{fig:experimentdesignboolean}.

\begin{figure}[ht]
\begin{center}
\includegraphics[width=14cm]{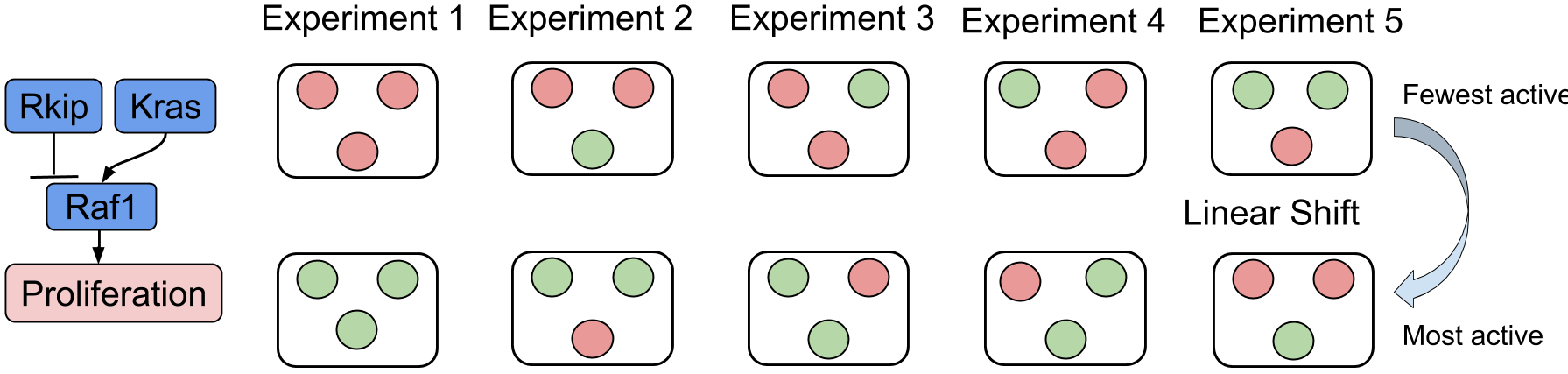}
\caption{Two experimental designs of the EGFR network. The top row depicts the input data set corresponding to the representative $S_1$ of the equivalence class $\mathcal E_1$, and the second row corresponds to the input data set $S'_1=\{111,110,101,011,001\}\in \mathcal E_1$.  The colored circles are in the same configuration as the variables in the subnetwork on the left; green represents 1 (active) and red represents 0 (inactive).}
\end{center}
\label{fig:experimentdesignboolean}
\end{figure}

\subsection{Using Data to Select Interactions}

Next we consider the second guiding question of which interactions are identifiable by a given set of input data.

The input data set $S_2=\{000,001,011,100,110\}$ in the second equivalence class in Table \ref{table:equsEGFR_example1} also identifies $B$, though not uniquely: $S_2$ also identifies the basis $\{1, x_3, x_2, x_2x_3, x_1\}$.  In fact, the monomial $x_2x_3$, representing the interaction between Kras and Raf1, is identified by the same data set as $x_1x_2$. Similarly, the input data set in the last equivalence class additionally identifies the monomial $x_1x_3$, corresponding to the interaction between Rkip and Raf1.

Now we consider the case of adding a point to an existing input data set.  Suppose we start with the set $\{000,001,010,100,110\}$, which is associated with the unique model basis $\{1, x_3, x_2, x_1, x_1x_2\}$. Which monomials are identified by adding one point to the set?  As there are only 3 possible points that could be added, we find the following results.

\begin{table}[ht]
\begin{center}
 \begin{tabular}{|c|c|c|}
\hline
\#Bases&	Model Bases & Data set	 \\ \hline
1& $\{1, x_3, x_2, \mathbf{x_2x_3}, x_1, x_1x_2\}$&	$\{000, 001, 010, \textbf{011}, 100, 110\}$	 \\ \hline
1& $\{1, x_3, x_2, x_1, \mathbf{x_1x_3}, x_1x_2\}$&	$\{000, 001, 010, 100, \textbf{101}, 110\}$  \\ \hline	
2&$\{1, x_3, x_2, \mathbf{x_2x_3}, x_1, x_1x_2\}$ &	$\{000,001,010,100,110, \textbf{111}\}$   \\ 	
&$\{1, x_3, x_2, x_1, \mathbf{x_1x_3}, x_1x_2\}$ &	   \\ \hline	

\end{tabular}
\end{center}
\caption{Effect of adding a point to an existing input data set.  Given the set $\{000,001,010,100,110\}$  with associated unique model basis $\{1, x_3, x_2, x_1, x_1x_2\}$, the table shows which new monomials (bold) are identified when a new point (bold) is added.}
\label{table:boolean-EGFR-add}
\end{table}

Table \ref{table:boolean-EGFR-add} shows the effect on model bases when a point is added to the input data set.  In particular, we see that adding the point 111 results in 2 distinct model bases, each of which contains a new monomial: $x_2x_3$ and $x_1x_3$, respectively. Thus these model bases predict more interactions beyond the one of interest.

\section{Discussion}
\label{sec:discussion}

The proposed computational framework is a data-driven approach for systematic and efficient experimental design and model selection as one process rather than independent steps in the data science pipeline. Performing these steps in a unified manner ensures economical experimental design where only the necessary experiments are performed as well as minimizing the number of computations needed for model selection based on data. While our work was primarily driven by problems in biological data science, the process can be applied to other fields where data collection capability is limited due to cost, time, ethical, or other constraints. Areas of impact include biomedical research, experimental physics, and real-time decision making. For example, while human biological samples such as cells, tissues, organs, blood, and sub-cellular materials are central for biomedical research, there are considerable ethical challenges in the collection, export, storage, and reuse of these samples~\cite{aarons14,tindana14}. As a second example, if an experiment corresponds to a strategy and the model bases represent outcomes,  knowing which strategies correspond to unique outcomes may be desirable in high-stakes operations. In many applications, it is crucial to only collect data that are necessary for the modeling process and information extraction.

The presented work suggests several theoretical and computational questions worth investigating. For example, the equivalence classes induced by a linear shift have interesting mathematical properties in terms of their number and sizes. These properties and their implications on the relationship between data and modeling have the potential to inform experimental design and model selection further.

Furthermore, the database we provide for quick lookup enables experimentalists of various backgrounds to use our results by allowing the circumventing of the computational steps. Growing the database and expanding the information it provides will make our approach even more accessible to the data science community.

\bibliographystyle{plain}
\bibliography{refs-for-arxiv}

\section{Appendix}
\label{apdx:div}

\subsection{Additional Results on Equivalence Classes}

We provide some additional results and proofs which are necessary for establishing the theorems in Section \ref{sec:div-eq}. Throughout this section, let $S$ be an input data set and $\mathcal{M}(S)$ be the linear shift matrix of $S$ as in Definition \ref{def:LSM}.

\begin{lemma} \label{newthm:inter-columns}
If there are two equal entries in any two distinct columns of $\mathcal{M}(S)$, then the two columns contain the same data sets.
\end{lemma}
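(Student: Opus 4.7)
\medskip
\noindent\textbf{Proof proposal.} The plan is to exploit the fact that each column of $\mathcal{M}(S)$ is a single orbit under coordinatewise translation, so that any single match across two columns forces the underlying ``base sets'' of those two columns to differ only by a translation, and hence their orbits coincide.

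First I would rewrite the entries of a fixed column $\bm a$ in a convenient form. For each $\bm a$, define
\[
T_{\bm a} \;:=\; \mathcal M(S,\bm a,\bm 0) \;=\; \{(a_1 x_{i1},\,a_2 x_{i2},\,\ldots,\,a_n x_{in}) : i=1,\ldots,m\}.
\]
Then a direct inspection of Definition~\ref{def:LSM} shows that for every $\bm b\in\mathbb Z_p^n$,
\[
\mathcal M(S,\bm a,\bm b) \;=\; T_{\bm a} + \bm b,
\]
where $T_{\bm a}+\bm b$ denotes coordinatewise translation of every point of $T_{\bm a}$ by $\bm b$. Since $\bm b$ ranges over all of $\mathbb Z_p^n$, the column indexed by $\bm a$ equals, as an (unordered) list of entries, the collection of all $\mathbb Z_p^n$-translates of $T_{\bm a}$.

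Now I would apply the hypothesis. Suppose the columns indexed by distinct $\bm a$ and $\bm a'$ share an entry, i.e. there exist $\bm b,\bm b'\in\mathbb Z_p^n$ with
\[
T_{\bm a} + \bm b \;=\; T_{\bm a'} + \bm b'.
\]
Setting $\bm\delta := \bm b' - \bm b \pmod p$, this rearranges to $T_{\bm a} = T_{\bm a'} + \bm\delta$, so $T_{\bm a}$ itself is a translate of $T_{\bm a'}$. Consequently, for any $\bm c\in\mathbb Z_p^n$,
\[
T_{\bm a} + \bm c \;=\; T_{\bm a'} + (\bm\delta + \bm c),
\]
which shows every entry appearing in column $\bm a$ also appears in column $\bm a'$. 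Swapping the roles of $\bm a$ and $\bm a'$ gives the reverse inclusion, so the two columns contain exactly the same data sets.

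The main thing to be careful about is the bookkeeping in the first step: checking that $\mathcal M(S,\bm a,\bm b)=T_{\bm a}+\bm b$ holds as a set (and in particular that the translation does not accidentally merge or split points modulo $p$), and confirming that the translation action of $\mathbb Z_p^n$ is well-defined and transitive on each column. Once that is in place, the remainder of the argument is purely a one-line orbit computation, so I do not anticipate a real obstacle beyond this verification.
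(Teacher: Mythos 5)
Your proposal is correct and is essentially the same argument as the paper's: the paper likewise starts from the shared entry $\mathcal M(S,\bm a,\bm b)=\mathcal M(S,\hat{\bm a},\hat{\bm b})$ and translates both sides by an arbitrary $\bm c\in\mathbb Z_p^n$ to conclude that all $p^n$ entries of one column appear in the other. Your reformulation via $T_{\bm a}$ and translation orbits is just a cleaner packaging of the identical idea.
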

\begin{proof}
	Let $m \leq p^n$ and $S_i=\{(x_{11},\ldots,x_{1n}),\ldots,(x_{m1},\ldots,x_{mn})\}$.
	Let $\bm{a}=(a_1,\ldots,a_n)$, $\bm{b} =(b_1,\ldots,b_n)$ and $\bm{\hat{a}} = (\hat{a}_1,\ldots,\hat{a}_n),\bm{\hat{b}} = (\hat{b}_1,\ldots,\hat{b}_n)$, where $a_i, \hat{a}_i \in \mathbb Z_p \backslash \{0\}$ and $b_i, \hat{b}_i \in \mathbb Z_p$, $1\leq i  \leq n$, such that $\mathcal{M}(S,\bm{a},\bm{b})=\mathcal{M}(S,\bm{\hat{a}},\bm{\hat{b}})$. By definition, we have
	\begin{align*}
	\mathcal{M}(S,\bm{a},\bm{b})& =  \{(a_1x_{11}+b_1,\ldots,a_nx_{1n}+b_n) , \ldots, (a_1x_{m1}+b_1,\ldots,a_nx_{mn}+b_n)\}\\
	& =\{(\hat{a}_1x_{11}+\hat{b}_1,\ldots,\hat{a}_nx_{1n}+\hat{b}_n) , \ldots, (\hat{a}_1x_{m1}+\hat{b}_1,\ldots,\hat{a}_nx_{mn}+\hat{b}_n)\}\\
	& =\mathcal{M}(S,\bm{\hat{a}},\bm{\hat{b}})
	\end{align*}
	
	Then for any $\bm{c} = (c_1,\ldots,c_n)\in \mathbb{Z}_p^n$, we have
	\begin{align*}
	\mathcal{M}(S,\bm{a},\bm{b}+\bm{c})& =  \{ \ldots, (a_1x_{i1}+b_1+c_1,\ldots,a_nx_{in}+b_n+c_n),\ldots\}\\
	& =\{ \ldots, (\hat{a}_1x_{i1}+\hat{b}_1+c_1,\ldots,\hat{a}_nx_{in}+\hat{b}_n+c_n),\ldots\}\\
	& =\mathcal{M}(S,\bm{\hat{a}},\bm{\hat{b}}+\bm{c}).
	\end{align*}

	As $\bm{c}$ is arbitrary, all the $p^n$ data sets in the column $\mathcal{M}(S,\bm{a})$ repeat in the column $\mathcal{M}(S,\hat{\bm{a}})$.
\end{proof}

\begin{corollary}
For any two columns $\mathcal{M}(S,\bm{a})$ and $\mathcal{M}(S,\bm{a'})$ of $\mathcal{M}(S)$, exactly one of the following is true:
\begin{enumerate}
    \item $\mathcal{M}(S,\bm{a})$ and $\mathcal{M}(S,\bm{a'})$ contain exactly the same data sets.
    \item $\mathcal{M}(S,\bm{a})$ and $\mathcal{M}(S,\bm{a'})$ contain no common data set.
\end{enumerate}
\end{corollary}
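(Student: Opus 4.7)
The plan is to deduce this corollary almost immediately from Lemma \ref{newthm:inter-columns}, by a simple case split on whether the two columns share any entry. The only real content beyond invoking the lemma is verifying mutual exclusivity, which requires observing that each column is nonempty.

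First I would argue mutual exclusivity. Each column $\mathcal{M}(S,\bm{a})$ is indexed by $\bm{b}\in\mathbb{Z}_p^n$ and therefore contains $p^n$ entries, so in particular it is nonempty. If (1) holds then the two columns contain identical data sets and, being nonempty, necessarily share at least one common entry, which contradicts (2). So (1) and (2) cannot hold simultaneously.

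Next I would establish that at least one of (1) and (2) must hold, via dichotomy. Either the two columns $\mathcal{M}(S,\bm{a})$ and $\mathcal{M}(S,\bm{a'})$ have no entry in common, in which case (2) holds directly by definition, or there exist choices of $\bm{b}$ and $\bm{b'}$ with $\mathcal{M}(S,\bm{a},\bm{b})=\mathcal{M}(S,\bm{a'},\bm{b'})$. In the latter case, Lemma \ref{newthm:inter-columns} applies verbatim: it shows that every data set in the column $\mathcal{M}(S,\bm{a})$ also appears in the column $\mathcal{M}(S,\bm{a'})$. The symmetric argument (swapping the roles of $\bm{a}$ and $\bm{a'}$) then gives the reverse containment, so the two columns contain exactly the same data sets, yielding (1).

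There is no real obstacle here; this is essentially a restatement of Lemma \ref{newthm:inter-columns} with the mutually exclusive alternative made explicit. The only subtlety worth flagging in the write-up is that Lemma \ref{newthm:inter-columns} as stated gives one-sided repetition (every entry of one column reappears in the other), so the proof must invoke the lemma in both directions, or equivalently note that both columns have the same cardinality $p^n$ as multisets, and hence one-sided containment of the underlying sets together with the same set of $p^n$ shifts of $\bm{b}$ forces set equality.
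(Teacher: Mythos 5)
Your proposal is correct and matches the paper's (implicit) argument: the corollary is stated as an immediate consequence of Lemma \ref{newthm:inter-columns}, obtained exactly by the case split you describe, and your remark about applying the one-sided containment symmetrically (or noting both columns have $p^n$ entries) is a fair point of care that the paper leaves unstated.
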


\begin{definition}
\label{def:cluster}
	For any $\bm{v}\in \mathbb Z_p^n\setminus \{\bm{0}\}$, define $C_{\bm{v}}=\{\bm{v},2\bm{v},\ldots,(p-1)\bm{v}\}$.
\end{definition}
	
Note that if $\bm{u}=k\bm{v}$ for some $1\leq k\leq p-1$, we will have $C_{\bm{u}}=C_{\bm{v}}$. Thus, we can pick any one of $\bm{v}, \ldots, (p-1)\bm{v}$ to represent $C_{\bm{v}}$, which implies the following proposition.

\begin{proposition}
		Let $A=\frac{p^n-1}{p-1}=p^{n-1}+p^{n-2}+\cdots +p+1$. We can pick $\bm{v}_1, \bm{v}_2, \ldots, \bm{v}_A \in \mathbb Z_p^n\setminus \{\bm{0}\}$ so that $C_{\bm{v_1}}, C_{\bm{v_2}},\ldots, C_{\bm{v_A}}$ form a partition of $\mathbb Z_p^n\setminus \{\bm{0}\}$.
\end{proposition}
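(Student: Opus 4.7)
The plan is to recognize this statement as the standard fact that the action of the nonzero scalars $\mathbb{Z}_p^\times$ on $\mathbb{Z}_p^n \setminus \{\bm{0}\}$ partitions the nonzero vectors into equivalence classes of equal size, each of which is precisely a set $C_{\bm{v}}$. The size count $A = (p^n-1)/(p-1)$ then falls out by dividing the total number of nonzero vectors by the size of each class. This is essentially the computation of $|\mathbb{P}^{n-1}(\mathbb{F}_p)|$.

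First I would define the relation $\bm{u} \sim \bm{v}$ on $\mathbb{Z}_p^n \setminus \{\bm{0}\}$ by $\bm{u} = k\bm{v}$ for some $k \in \{1,\ldots,p-1\}$, and verify it is an equivalence relation. Reflexivity uses $k=1$. Symmetry uses that since $p$ is prime, $\mathbb{Z}_p$ is a field, so every $k \in \{1,\ldots,p-1\}$ has a multiplicative inverse in the same set; thus $\bm{u} = k\bm{v}$ gives $\bm{v} = k^{-1}\bm{u}$. Transitivity is immediate from closure of $\mathbb{Z}_p^\times$ under multiplication. The equivalence class of $\bm{v}$ is exactly $C_{\bm{v}}$ by definition.

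Next I would show each $C_{\bm{v}}$ has exactly $p-1$ distinct elements. Suppose $k\bm{v} = l\bm{v}$ for $k,l \in \{1,\ldots,p-1\}$. Then $(k-l)\bm{v} = \bm{0}$ in $\mathbb{Z}_p^n$. Since $\bm{v} \neq \bm{0}$, at least one coordinate $v_i$ is nonzero in $\mathbb{Z}_p$, and $(k-l)v_i = 0$ in the field $\mathbb{Z}_p$ forces $k \equiv l \pmod{p}$, hence $k=l$. So $|C_{\bm{v}}| = p-1$.

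Finally, since $\mathbb{Z}_p^n \setminus \{\bm{0}\}$ has $p^n - 1$ elements and partitions into equivalence classes each of size $p-1$, the number of classes is $(p^n-1)/(p-1) = A$. Choosing one representative $\bm{v}_i$ from each class yields the desired $\bm{v}_1,\ldots,\bm{v}_A$ with $C_{\bm{v}_1},\ldots,C_{\bm{v}_A}$ forming a partition of $\mathbb{Z}_p^n \setminus \{\bm{0}\}$. There is no real obstacle here; the only subtlety is making explicit the use of primality of $p$ (so that $\mathbb{Z}_p$ is a field), which is what guarantees both the existence of inverses for symmetry and the cancellation argument for the size count.
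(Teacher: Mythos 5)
Your proof is correct and follows essentially the same route as the paper: treat the sets $C_{\bm{v}}$ as the orbits of the nonzero vectors under scalar multiplication, note each has $p-1$ elements, and divide $p^n-1$ by $p-1$ to get $A$ classes. You simply make explicit two points the paper leaves implicit (that $\sim$ is an equivalence relation and that $|C_{\bm{v}}|=p-1$ via cancellation in the field), which is a welcome tightening rather than a different argument.
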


\begin{proof}
    The total number of elements is $p^n-1$ and each equivalence class $C_v$ contains $p-1$ elements. This leads to the number of equivalence classes being $A=\frac{p^n-1}{p-1}$, which provides a partition of $\mathbb Z_p^n\setminus \{\bm{0}\}$.
\end{proof}

\begin{lemma}
\label{thm:twocases}
	For any data set $S$ and any $C_{\bm{v}}=\{\bm{v},2\bm{v},\ldots,(p-1)\bm{v}\}$, exactly one of the following is true:
	\begin{enumerate}
		\item $S+C_{\bm{v}}:=\{S+\bm{v},S+2\bm{v},\ldots,S+(p-1)\bm {v}\}$ along with $S$ are $p$ different points sets,\label{casei}
		\item $S+k\bm{v}=S$ for all $1\leq k\leq p-1$.\label{caseii}
	\end{enumerate}
\end{lemma}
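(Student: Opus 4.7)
The plan is to prove the dichotomy by exploiting the primality of $p$ in a standard orbit-type argument. The two cases are clearly mutually exclusive: if (2) holds then in particular $S+\bm{v}=S$, which directly contradicts the distinctness asserted by (1). Therefore the content of the lemma reduces to showing that whenever (1) fails, (2) must hold.

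First I would assume (1) fails. Then among the $p$ sets $S,\, S+\bm{v},\, S+2\bm{v},\, \ldots,\, S+(p-1)\bm{v}$, there must be two equal ones, say $S+k\bm{v}=S+\ell\bm{v}$ with $0\le k<\ell\le p-1$ (allowing $k=0$, interpreted as $S+0\bm{v}=S$). Translating both sides by $-k\bm{v}$ yields $S=S+d\bm{v}$, where $d:=\ell-k$ satisfies $1\le d\le p-1$.

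Next I would propagate this single equality to all of $\{1,\ldots,p-1\}$. Iterating $S=S+d\bm{v}$ gives $S=S+jd\bm{v}$ for every integer $j\ge 0$. Here the primality of $p$ enters decisively: since $1\le d\le p-1$, $d$ is a unit in $\mathbb{Z}_p$, so the set $\{d,2d,\ldots,(p-1)d\}\pmod p$ is exactly $\{1,2,\ldots,p-1\}$. Hence for every $k\in\{1,\ldots,p-1\}$ we can choose $j$ with $jd\equiv k\pmod p$, and then $S+k\bm{v}=S+jd\bm{v}=S$, which is precisely (2).

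The argument is essentially the observation that the cyclic group $\langle +\bm{v}\rangle\cong \mathbb{Z}_p$ acts on the power set of $\mathbb{Z}_p^n$, and by Lagrange the stabilizer of $S$ (a subgroup of $\mathbb{Z}_p$) is either trivial, giving an orbit of size $p$ as in (\ref{casei}), or the whole group, giving a singleton orbit as in (\ref{caseii}). The only genuine obstacle is making sure the indexing is clean (handling the $k=0$ case so that ``$S$ together with $S+C_{\bm{v}}$'' is treated on equal footing); I would write the proof elementarily as above rather than invoking orbit-stabilizer formally, to keep it self-contained and consistent with the style of the surrounding proofs.
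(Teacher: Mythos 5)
Your proof is correct and follows essentially the same route as the paper's: split on whether the $p$ translates are all distinct, and if not, derive $S=S+d\bm{v}$ and iterate. You are in fact slightly more explicit than the paper at the key step, spelling out that $d$ being a unit mod $p$ makes $\{d,2d,\ldots,(p-1)d\}$ exhaust all nonzero residues (and noting mutual exclusivity), which the paper's proof leaves implicit.
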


\begin{proof}

We consider two cases.
	
If $S+k_1\bm{v}\neq S+k_2\bm{v}$ for any $0\leq k_1< k_2\leq p-1$, then Case \ref{casei} holds.
		
If there exists $0\leq k_1< k_2\leq p-1$ such that $S+k_1\bm{v}= S+k_2\bm{v}$, then we have $S=S+(k_2-k_1)\bm{v}$, which implies for any $1\leq k\leq p-1$, we have $S+k(k_2-k_1)\bm{v}=S+(k_2-k_1)\bm{v}+(k-1)(k_2-k_1)\bm{v}=S+(k-1)(k_2-k_1)\bm{v}=\cdots=S+(k_2-k_1)\bm{v}=S$. Hence Case \ref{caseii} holds.
\end{proof}

\begin{lemma}
 \label{thm:same-repeat}
	Given a column in $\mathcal{M}(S)$, a data set will appear an equal number of times.
\end{lemma}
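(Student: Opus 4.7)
The plan is to reinterpret each column of $\mathcal{M}(S)$ as the orbit of a single ``base'' set under coordinatewise translation by $\mathbb{Z}_p^n$, and then deduce equal multiplicity by a stabilizer argument aided by Lemma~\ref{thm:twocases}.

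First I would fix a column index $\bm{a}=(a_1,\ldots,a_n)$ and set $T := \mathcal{M}(S,\bm{a},\bm{0})$, i.e.\ the entry in row $\bm{b}=\bm{0}$. Directly from Definition~\ref{def:LSM}, every entry of the column $\mathcal{M}(S,\bm{a})$ can be rewritten as $T+\bm{b}$, where $+\bm{b}$ denotes the coordinatewise addition of $\bm{b}$ (mod~$p$) to every point of $T$. Thus the column is exactly the indexed family $(T+\bm{b})_{\bm{b}\in\mathbb{Z}_p^n}$, and the claim amounts to saying that each value attained by this family is attained the same number of times.

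Next I would introduce the stabilizer $H := \{\bm{b}\in\mathbb{Z}_p^n : T+\bm{b}=T\}$ and verify the subgroup axioms inside $(\mathbb{Z}_p^n,+)$: clearly $\bm{0}\in H$; if $T+\bm{b}=T$ and $T+\bm{b}'=T$ then $T+(\bm{b}+\bm{b}')=(T+\bm{b})+\bm{b}'=T$; and from $T+\bm{b}=T$ one gets $T=T+(-\bm{b})$. Two row indices satisfy $T+\bm{b}=T+\bm{b}'$ iff $\bm{b}'-\bm{b}\in H$, so the set of row indices producing any given translate is a coset of $H$ and hence has cardinality $|H|$. This immediately yields the lemma: every distinct data set appearing in the column appears exactly $|H|$ times.

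As a final check, and to line up with Theorem~\ref{thm:LSM} which will invoke this result, I would note that Lemma~\ref{thm:twocases} (applied to $T$) says that for each $C_{\bm{v}}$ in the partition $\{\bm{0}\}\cup\bigcup_i C_{\bm{v}_i}$ of $\mathbb{Z}_p^n$, either $C_{\bm{v}}\subseteq H$ (case~(ii)) or $C_{\bm{v}}\cap H=\emptyset$ (case~(i)); consequently $|H|=1+(p-1)k$ for some $0\le k\le A$, matching the $p^{r}$ shape of the number of distinct sets per column. The only genuinely nontrivial step in the argument is the reinterpretation in the first paragraph; once the column is recognized as a translation family, the rest reduces to the elementary fact that cosets of a subgroup partition the ambient group into blocks of equal size, so I do not expect any significant obstacle.
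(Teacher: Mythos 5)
Your proof is correct, and it takes a cleaner route than the paper's. Both arguments ultimately rest on the same fact --- that a column of $\mathcal{M}(S)$ is the translation orbit of a single set --- but the paper proves equal multiplicity indirectly: it picks a set $X$ of maximal multiplicity in the column, uses the clusters $C_{\bm{v}}$ of Definition~\ref{def:cluster} and Lemma~\ref{thm:twocases} to count $\alpha_X(p-1)+1$ translations fixing $X$, hand-constructs that many distinct translations carrying $X$ to an arbitrary other set $Y$ in the column (including a separate verification that they are distinct), and concludes $f(Y,\bm{a})\ge f(X,\bm{a})$, whence equality by maximality. You instead identify the column as the indexed family $(T+\bm{b})_{\bm{b}\in\mathbb{Z}_p^n}$ with $T=\mathcal{M}(S,\bm{a},\bm{0})$, observe that the stabilizer $H=\{\bm{b}: T+\bm{b}=T\}$ is a subgroup of $(\mathbb{Z}_p^n,+)$, and note that each fiber of $\bm{b}\mapsto T+\bm{b}$ is a coset of $H$, hence of size $|H|$. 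This dispenses with the maximality device and the distinctness check, and it buys more: by Lagrange, $|H|$ and therefore the number $p^n/|H|$ of distinct sets per column are powers of $p$, so Theorem~\ref{thm:LSM} falls out immediately without invoking the $C_{\bm{v}}$ partition at all (your closing observation that $|H|\equiv 1 \pmod{p-1}$ is a harmless extra consistency check, not needed for the argument). The one thing to keep in mind is that $H$, and hence the common multiplicity, may vary from column to column since $T$ depends on $\bm{a}$; your argument correctly fixes the column throughout, which is all the lemma asserts.
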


\begin{proof}
Let $f(S',\bm{a})$ be the number of times that the data set $S'$ appears in the c column $\bm{a}=(a_1,\ldots,a_n)$ of $\mathcal{M}(S)$. Pick a data set $X$ such that
$$f(X,\bm{a}) = \max_{S'} f(S',\bm{a}).$$
Let $Y$ be a data set in the same column as $X$ and $Y \neq X$. Note that if we cannot find such~$Y$, then the column has only one unique data set and the lemma is trivially true. Otherwise, the lemma will follow if we can prove that $f(X,\bm{a}) \leq f(Y,\bm{a})$.

Pick $\bm{w} \in \mathbb Z_p^n\setminus \{\bm{0}\}$ such that $X + \bm{w} = X$. (Note that if we cannot find such $\bm{w}$, then the column has $p^n$ different data sets and the lemma follows immediately.) According to Definition \ref{def:cluster}, we have $C_{\bm{w}}=\{\bm{w},2\bm{w},\ldots,(p-1)\bm{w}\}$. By Theorem \ref{thm:twocases}, $X + \bm{z} = X$, for any $\bm{z} \in C_{\bm{w}}$. Assume $\alpha_{X}$ is the total number of $C_{\bm{w}}$. Then, $f(X,\bm{a}) = \alpha_{X}(p-1)+1$. Pick $\bm{v} \in \mathbb Z_p^n\setminus \{\bm{0}\}$ such that $X + \bm{v} = Y$.
(Note that such $\bm{v}$ exists since the data sets $X$ and $Y$ are in the same column). Then, for each $C_{\bm{w}}$, there is a set, $D = \{\bm{v+w},\bm{v}+2\bm{w}, \ldots, \bm{v}+(p-1)\bm{w}\}$ such that $X + \bm{d} = Y$, for all $\bm{d} \in D$. As $\big| D \big| = p-1,$ we can construct $p-1$ maps from $X$ to~$Y$ for each $C_{\bm{w}}$. Then note that there are $\alpha_X$ number of $C_{\bm{w}}$ and we construct $\alpha_X(p-1)+1$ maps from $X$ to $Y$ (by including $\bm{v}$ itself).

We will show that the constructed $\alpha_X(p-1)+1$ maps from $X$ to $Y$ are distinct.
	Note that the $\alpha_X(p-1)+1$ maps we constructed are $\bm{v}+\bm{z}$ for any $\bm{z}$ such that $X+\bm{z}=X$.
	
Let $\bm{w}\neq \bm{0}$ with $X+\bm{w}=X$ and $1\leq m_1<m_2\leq p-1$. If $\bm{v}+m_1w = \bm{v}+m_2\bm{w}$, then we have $m_1\bm{w}=m_2\bm{w}$, which implies $m_1=m_2$, a contradiction. Now let $\bm{w}, \bm{w}'\neq \bm{0}$ with $X+\bm{w}=X$, $X+\bm{w}'=X$,  $\bm{w}'\notin C_{\bm{w}}$, and $1\leq m_1, m_2\leq p-1$. If $\bm{v}+m_1w= \bm{v}+m_2\bm{w}'$, then we have $\bm{w}'=\frac{m_1}{m_2}\bm{w}$, which implies that $\bm{w}'\in C_{\bm{w}}$, a contradiction. %
%
%
	Hence the maps $\alpha_X(p-1)+1$ are distinct.

This implies the data set $f(Y,\bm{a}) \geq \alpha_X(p-1)+1 = f(X,\bm{a})$. Moreover, recall that $f(X,\bm{a}) = \max_{S'} f(S',\bm{a})$, we have $f(Y,\bm{a}) = f(X,\bm{a})$. Thus, each data set repeats the same number of times in one column of the linear shift matrix.
\end{proof}

\newpage
\subsection{Algorithms}
Below are the primary algorithms used to generate the equivalence classes table in DoEMS.
\begin{table}[h]
\begin{tabular}{l|l} \hline\hline
\multicolumn{2}{l}{\textbf{\large{Algorithm: Generate Equivalence Classes}}}\\ \hline \hline
   \textit{Inputs} & $p,n,m$\\ \hline
   \textit{Outputs} & All equivalence classes of sets in $\mathcal S = \{\mathbb Z_p^n\}_m$  \\   \hline
   \textit{Steps}
& Generate  all data sets in $\mathcal S$\\
& Select one data set $S$ in $\mathcal S$\\
& Initialize equivalence class set $\mathcal E=[S]$ and set $S_{rest} = \mathcal S \setminus [S]$\\
& \\
& \emph{\#Generate all linear shifts $\ell=ax+b$ for one coordinate}\\
& Initialize linear shift list $L = [\ ]$\\
& for $a$ in $[1, \ldots, p-1]$ \\
&\hspace{1cm} for $b$ in $[0,\ldots, p-1]$\\
&\hspace{2cm} append $[a, b]$ to $L$\\
& $LS =[[a_1, b_1],\ldots, [a_n, b_n]$ : $0 < a_i < p $, $ 0 \leq b_i < p$, $1\leq i\leq n]$ \\
& \\
& \emph{\# Generate all combinations of linear shifts for all coordinates}\\
& while $S \neq \emptyset $\\
& \hspace{1cm} for $f_i$ in $LS = [[a_1, b_1],\ldots, [a_n, b_n]]$\\
&\hspace{2cm}  apply $\ell_i$ to $S$ to generate new data set $S^*$ \\
&\hspace{2cm}  append $S^*$ to $\mathcal E$ and remove $S^*$ from $S_{rest}$\\
&\hspace{1cm}  set $S$ to first element in $S_{rest}$ and set $\mathcal E = [S]$ \\
& return $\mathcal E$ \\ \hline
\end{tabular}
\caption{Algorithm to generate all equivalence classes for data sets in $\{\mathbb Z_p^n\}_m$.}
\label{alg1}
\end{table}

\begin{table}[h]
\begin{tabular}{l|l} \hline\hline
\multicolumn{2}{l}{\textbf{\large{Algorithm: Find Representatives}}}\\ \hline \hline
   \textit{Input} & an equivalence class $\mathcal E\subset \mathcal S = \{\mathbb Z_p^n\}_m$\\ \hline
   \textit{Output} & a representative $S\in \mathcal S$ for $\mathcal E$; its set distance  \\   \hline
   \textit{Steps}
& Initialize representatives list $rep=[ \ ]$\\
& Initialize $rep=\mathcal E[0]$; $D=+ \infty$\\
& for $S$ in $\mathcal E$\\
&\hspace{1cm} Set $D_{new} =D(S,0)$\\
&\hspace{1cm} if  $D_{new} < D$\\
&\hspace{2cm}  $D = D_{new}$\\
&\hspace{2cm}  $rep = S$\\
&  return $S, D$\\ \hline
\end{tabular}
\caption{Algorithm to identify the representative of an equivalence class.}
\label{alg2}
\end{table}

\end{document}